\newtheorem{theorem}{Theorem}[section]
\newtheorem{proposition}{Proposition}[section]
\newtheorem{lemma}{Lemma}[section]
\newtheorem{corollary}{Corollary}[section]
\newtheorem{remark}{Remark}[section]
\numberwithin{equation}{section}
\title[hybrid inverse problems]{Some stability inequalities for hybrid inverse problems}
\author[Mourad Choulli]{Mourad Choulli}
\address{Universit\'e de Lorraine}
\email{mourad.choulli@univ-lorraine.fr}
\thanks{The author is supported by the grant ANR-17-CE40-0029 of the French National Research Agency ANR (project MultiOnde). }
\date{}
\begin{document}

\begin{abstract}
We study some hybrid inverse problems associated to BVP's for Schr\"odinger and Helmholtz type equations. The inverse problems we consider consist in the determination of coefficients from the knowledge of internal energy densities. We establish local Lipschitz stability inequalities as well as H\"older stability inequalities.
\end{abstract}

\subjclass[2010]{35R30}

\keywords{quantitative photoacoustic tomography, quantitative dynamic elastography, microwave imaging by elastic deformation, acousto-optic imaging, Lipschitz stability estimate, H\"older stability estimate.}

\maketitle

\section{Introduction}\label{section1}

Coupled-physics or hybrid inverse problems have attracted many researchers from the inverse problem community during the last decades. 

Among results concerning these hybrid inverse problems we quote those established by Bal and Uhlmann \cite{BU} in the isotropic case. Precisely, they considered the quantitative photoacoustic tomography. They showed that there exists an open subset of illuminations for which we have Lipschitz stability for the determination of two medium parameters  from $2n$ boundary measurements in a $n$-dimensional space. They also proved that two measurements are sufficient provided that the domain under consideration satisfies an extra geometric condition. Using a different method, Alessandrini, Di Cristo, Francini and Vessella \cite{ADFV} proved a H\"older stability estimate in the case of two well chosen illuminations. Recently, Bonnetier, Choulli and Triki \cite{BCT}
established also a H\"older stability estimate in the case of two arbitrary pointwise sources generating two illuminations. This situation is more suitable for real physical problems. The determination of the absorption coefficient from a single measurement was already studied by Choulli and Triki \cite{CT1,CT2}. For this problem we got a weighted H\"older stability estimate.

We just quoted few results for the quantitative photoacoustic tomography.  We refer to Alberti and Capdeboscq \cite{AC} and references therein for a complete overview  concerning recent progress dealing with hybrid  inverse problems for both isotropic and anisotropic cases.

We discuss in the present work the stability issue for various kind of hybrid inverse problems that lead to the same BVP. For possible applications we list below four examples.

\subsection{Quantitative  photoacoustic tomography} It is a hybrid imaging modality where high frequency electromagnetic waves are combined with ultrasounds. Precisely, the medium is illuminated by high frequency electromagnetic wave (e.g. laser). A part of the electromagnetic radiation is then absorbed by the tissues and therefore dissipated partially into heat. The increase of temperature produces an expansion of the medium and in consequence creates acoustic waves. From a mathematical point of view, the first step consists in determining the absorbed electromagnetic energy density $H$ from boundary measurements. It turns out that $H$ is the initial condition of an acoustic wave equation. This is typically a control problem which is already solved and many results can be found in the literature devoted to control theory.  Once we recover $H$, the objective is then the determination of the diffusion coefficient or the diffusion matrix $\mathfrak{a}$ and the absorption  coefficient $\mathfrak{q}$ from the energy
\[
H(x)=G(x)\mathfrak{q}(x)u(x),\quad x\in \Omega .
\]
where $\Omega$ is the domain occupied by the medium,  $G$ is the Gr\"uneisen parameter  and $u$ is the light intensity. In the diffusive regime, it is shown that $u$ is the solution of the BVP 
\begin{equation}\label{e1}
\left\{
\begin{array}{l}
-\mbox{div}(\mathfrak{a}\nabla u)+\mathfrak{q}u=0\quad \mbox{in}\; \Omega,
\\
u_{|\Gamma}=f.
\end{array}
\right.
\end{equation}
Here $\Gamma$ is the boundary of $\Omega$, $\mathfrak{a}$ is the diffusion coefficient in the isotropic case or the diffusion matrix in the anisotropic case and $f$ represents the illumination.

We assume in the present work that the parameter $G$ is known. We set for simplicity $G=1$. 

In fact, several energy densities $H$ may be necessary to recover $\mathfrak{a}$ and $\mathfrak{q}$. Each energy density corresponds to a different illumination.

\subsection{Quantitative dynamic elastography}

In the quantitative dynamic elastography we want to recover the tissue parameters from the tissue displacement.
 In the simplified  scalar elastic model $u$, one component of the displacement, is the solution of the BVP \eqref{e1} in which $\mathfrak{a}$ is the shear modulus and $\mathfrak{q}=-\rho k^2$, where $\rho$ is the density and $k$ is the frequency. We assume that $k>0$ is known. Then the objective in that inverse  problem is the determination of the shear modulus and the density from the knowledge of values of $u$ corresponding to different values of the boundary data $f$.

\subsection{Microwave imaging by elastic deformation}

The construction of the conductivity in the context of electrical impedance tomography from boundary measurements is well known to be severely ill-conditioned. 
It is shown in Ammari, Capdeboscq, de Gournay, Triki and Rozanova-Pierrat \cite{ACGTR} that the boundary measurements with simultaneous localized ultrasonic perturbations allow  the recovery of the conductivity with good resolution. In that case the  electrical impedance tomography is substituted by the problem of reconstructing the conductivity (and permittivity)  from internal electrical energy densities.
In the microwave regime, this problem leads again to the BVP \eqref{e1}, where $\mathfrak{a}$ is the conductivity and $\mathfrak{q}=-k^2\mathfrak{p}$, $k>0$ is  the frequency and $\mathfrak{p}$ is the permittivity. The internal data is given by various electrical energy densities of the form $H=\mathfrak{p}u^2$ or $H=\mathfrak{a}|\nabla u|^2$, corresponding to several boundary data $f$.

In the preceding two examples, we assume that the frequency $k$ is known. We suppose for simplicity that $k=1$.

\subsection{Acousto-optic imaging}

When a medium is exited by an acoustic radiation then its optical properties are modified. It is known that in this case the scattered field carries informations about the medium. This principle was the basis for the development of a hybrid imaging modality, known as acousto-optic imaging. In the simplified model, the electromagnetic energy density $u$ solves the BVP \eqref{e1} in which $\mathfrak{a}$ is the diffusion coefficient and $\mathfrak{q}$ is the absorption coefficient (e.g. Bal and  Schotland \cite{BS}). The measured internal energy density for the actual inverse problem is given by $H=\mathfrak{q}u^2$.

\subsection{Main notations and definitions}

In the rest of this text we use the following notations. The norm of a Banach space $E$ is denoted by $\|\cdot \|_E$. The ball, of a Banach space $E$, of center $x_0\in E$ and radius $r>0$ will denoted by $B_E(x_0,r)$. If $E$ and $F$ are two Banach spaces, the norm of $\mathscr{B}(E,F)$, the Banach space of linear bounded operators between $E$ and $F$, will denoted by $\|\cdot \|_{\rm op}$.

Unless otherwise specified $\Omega$ is a $C^{1,1}$ bounded domain of $\mathbb{R}^n$ ($n\geq 2$) with boundary $\Gamma$. We denote by $\lambda_1(\Omega)$  the first eigenvalue of the Laplace operator on $\Omega$ with Dirichlet boundary condition.

For $\mu \ge 1$ and $\lambda \ge 0 $, define
\begin{align*}
&\mathcal{A}_\mu=\left\{\mathfrak{a}=(a^{k\ell})\in C^{0,1}\left(\overline{\Omega},\mathbb{R}^{n\times n}\right);\; \mathfrak{a}\; \mbox{is symmetric and} \right.
\\
&\hskip 6cm \left. \mu ^{-1}|\xi|^2\le (\mathfrak{a}\xi|\xi)\le \mu |\xi|^2\; \mbox{for all}\; \xi \in \mathbb{R}^n\right\},
\\
&\mathcal{A}_\mu^s=\{\mathfrak{a}\in C^{0,1}(\overline{\Omega});\; \mathfrak{a}I_n\in \mathcal{A}_\mu\},
\\
&\mathcal{Q}_\lambda =\{\mathfrak{q}\in L^\infty(\Omega ) ;\; \mathfrak{q}\ge -\lambda \},
\\
&\mathcal{Q}_\lambda ^-=\{\mathfrak{q}\in L^\infty(\Omega ) ;\; 0\ge \mathfrak{q}\ge -\lambda \}.
\end{align*}
Here $(\cdot |\cdot )$ is the Euclidian scalar product on $\mathbb{R}^n$ and $I_n$ is the  $n\times n$ identity matrix.

Let $\mathcal{J}=\{(\mu ,\lambda )\in [1,\infty )\times [0,\infty );\; \mu \lambda <\lambda_1(\Omega )\}$. Define, for all $(\mu ,\lambda)\in \mathcal{J}$, $\mathcal{D}_{\mu ,\lambda}=\mathcal{A}_\mu\times \mathcal{Q}_\lambda$, $\mathcal{D}_{\mu ,\lambda}^\bullet=\mathcal{A}_\mu^s\times \mathcal{Q}_\lambda^-$ and
\[
\mathcal{D}= \bigcup_{(\mu ,\lambda)\in \mathcal{J}}\mathcal{D}_{\mu ,\lambda} .
\]

\subsection{Local Lipschitz stability}

Let $r\ge 2$. We prove in Corollary \ref{corollary1.1} (Subsection \ref{subsection2.1}) that, for any $(\mathfrak{a},\mathfrak{q})\in \mathcal{D}$ and $f\in W^{2-1/r,r}(\Gamma)$, the BVP \eqref{e1} has a unique solution $u_{\mathfrak{a},\mathfrak{q}}\in W^{2,r}(\Omega)$.

\begin{theorem}\label{main-theorem1}
Suppose that, for some $0<\theta <1$, $\Omega $ is of class $C^{2,\theta}$, $\mathfrak{a}_0\in C^{1,\theta}(\overline{\Omega},\mathbb{R}^{n\times n})\cap \mathcal{A}_\mu$, for some $\mu \ge 1$, and $f\in C^{2,\theta}(\Gamma)$ satisfies $f>0$. Let $j=1$ or $j=2$. Then there exists $\mathcal{U}$, a neighborhood of $0$ in $L^\infty (\Omega)$, and a constant $C=C(n,\Omega, \mu  ,f,r)$ so that, for any $\mathfrak{q}$, $\tilde{\mathfrak{q}}\in \mathcal{U}$, we have
\begin{equation}\label{in1}
\|\mathfrak{q}-\tilde{\mathfrak{q}}\|_{L^\infty (\Omega )}\le C \|\mathfrak{q}u_{\mathfrak{q}}^j-\tilde{\mathfrak{q}}u_{\tilde{\mathfrak{q}}}^j\|_{L^\infty (\Omega )},
\end{equation}
where $u_{\mathfrak{q}}=u_{\mathfrak{a}_0,\mathfrak{q}}$ and $u_{\tilde{\mathfrak{q}}}=u_{\mathfrak{a}_0,\tilde{\mathfrak{q}}}$.
\end{theorem}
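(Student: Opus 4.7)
The plan is to treat $\Phi_j:\mathfrak{q}\mapsto \mathfrak{q}u_{\mathfrak{q}}^j$ as a perturbation of its value at $\mathfrak{q}=0$ and to exploit the fact that when $\mathfrak{q}=0$ the solution $u_0$ is strictly positive on $\overline{\Omega}$. Concretely, since $\mathfrak{a}_0\in C^{1,\theta}$, $\Gamma\in C^{2,\theta}$, and $f\in C^{2,\theta}(\Gamma)$ with $f>0$, classical Schauder theory gives $u_0\in C^{2,\theta}(\overline{\Omega})$; the maximum principle applied to $-\mathrm{div}(\mathfrak{a}_0\nabla u_0)=0$ with $u_{0|\Gamma}=f>0$ then yields a constant $\alpha>0$ with $u_0\ge \alpha$ on $\overline{\Omega}$.

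Next I would use Corollary 1.1 (with any fixed $r>n$) to define $u_{\mathfrak{q}}\in W^{2,r}(\Omega)\hookrightarrow C(\overline{\Omega})$ for every $\mathfrak{q}$ in a small $L^\infty$-neighborhood of $0$ (choosing $\mathcal{U}$ inside $\mathcal{Q}_\lambda^-$ for some $\lambda$ small enough that $(\mathfrak{a}_0,\mathfrak{q})\in\mathcal{D}$) and verify that $\mathfrak{q}\mapsto u_{\mathfrak{q}}$ is continuous from $L^\infty(\Omega)$ to $C(\overline{\Omega})$. Shrinking $\mathcal{U}$ if necessary I may therefore assume $u_{\mathfrak{q}}\ge \alpha/2$ uniformly on $\overline{\Omega}$ for every $\mathfrak{q}\in\mathcal{U}$, and also that $\|\mathfrak{q}\|_{L^\infty}\le \varepsilon$ for a small $\varepsilon$ to be chosen.

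The core step is the difference identity. Setting $w=u_{\mathfrak{q}}-u_{\tilde{\mathfrak{q}}}$, subtracting the two BVPs gives
\begin{equation*}
-\mathrm{div}(\mathfrak{a}_0\nabla w)+\tilde{\mathfrak{q}}w=(\tilde{\mathfrak{q}}-\mathfrak{q})u_{\mathfrak{q}}\ \text{in}\ \Omega,\qquad w_{|\Gamma}=0,
\end{equation*}
so Corollary 1.1 and the embedding $W^{2,r}\hookrightarrow L^\infty$ yield
\begin{equation*}
\|w\|_{L^\infty(\Omega)}\le C_1\|\mathfrak{q}-\tilde{\mathfrak{q}}\|_{L^\infty(\Omega)}\|u_{\mathfrak{q}}\|_{L^\infty(\Omega)},
\end{equation*}
with $C_1$ depending only on $n,\Omega,\mu,r$. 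On the other hand, the algebraic splitting
\begin{equation*}
\mathfrak{q}u_{\mathfrak{q}}^j-\tilde{\mathfrak{q}}u_{\tilde{\mathfrak{q}}}^j=(\mathfrak{q}-\tilde{\mathfrak{q}})u_{\mathfrak{q}}^j+\tilde{\mathfrak{q}}\bigl(u_{\mathfrak{q}}^j-u_{\tilde{\mathfrak{q}}}^j\bigr),
\end{equation*}
combined with $|u_{\mathfrak{q}}^j-u_{\tilde{\mathfrak{q}}}^j|\le j\max(\|u_{\mathfrak{q}}\|_\infty,\|u_{\tilde{\mathfrak{q}}}\|_\infty)^{j-1}|w|$ and the lower bound $u_{\mathfrak{q}}^j\ge (\alpha/2)^j$, produces
\begin{equation*}
(\alpha/2)^j\|\mathfrak{q}-\tilde{\mathfrak{q}}\|_{L^\infty}\le \|\mathfrak{q}u_{\mathfrak{q}}^j-\tilde{\mathfrak{q}}u_{\tilde{\mathfrak{q}}}^j\|_{L^\infty}+C_2\|\tilde{\mathfrak{q}}\|_{L^\infty}\|w\|_{L^\infty}.
\end{equation*}
Inserting the bound on $\|w\|_{L^\infty}$ gives an extra factor $C_3\|\tilde{\mathfrak{q}}\|_{L^\infty}\|\mathfrak{q}-\tilde{\mathfrak{q}}\|_{L^\infty}$ on the right; choosing $\varepsilon$ so small that $C_3\varepsilon\le (\alpha/2)^j/2$ allows this term to be absorbed into the left-hand side, which delivers \eqref{in1}.

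The main obstacle is purely technical: establishing in a quantitative form that the neighborhood $\mathcal{U}$ can be chosen uniformly so that both (i) $u_{\mathfrak{q}}$ stays bounded below by $\alpha/2$ and (ii) the elliptic estimate for the operator $-\mathrm{div}(\mathfrak{a}_0\nabla\cdot)+\tilde{\mathfrak{q}}$ has a constant $C_1$ independent of $\tilde{\mathfrak{q}}\in\mathcal{U}$. Both follow from the $W^{2,r}$-wellposedness provided by Corollary 1.1 together with the fact that, on the bounded set $\mathcal{U}\subset L^\infty$, one can bound $\|u_{\mathfrak{q}}\|_{W^{2,r}}$ uniformly in $\mathfrak{q}$ and use the continuous dependence of the resolvent on the coefficient.
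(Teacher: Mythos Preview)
Your argument is correct and takes a genuinely different route from the paper. The paper proceeds abstractly: after establishing in Subsection~2.2 that $\Psi:\mathfrak{q}\mapsto u_{\mathfrak{q}}$ is $C^1$ Fr\'echet differentiable from $L^\infty(\Omega)$ to $W^{2,r}(\Omega)$, it defines $\Phi(\mathfrak{q})=\mathfrak{q}\Psi(\mathfrak{q})^j$, computes $\Phi'(0)(\mathfrak{p})=\mathfrak{p}\,u_0^j$, observes that this is invertible on $L^\infty(\Omega)$ because $u_0\ge\min_\Gamma f>0$, and then invokes the inverse function theorem to conclude that $\Phi$ is a local diffeomorphism near $0$. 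You instead work directly with the difference: you split $\mathfrak{q}u_{\mathfrak{q}}^j-\tilde{\mathfrak{q}}u_{\tilde{\mathfrak{q}}}^j$ into the principal part $(\mathfrak{q}-\tilde{\mathfrak{q}})u_{\mathfrak{q}}^j$ and the remainder $\tilde{\mathfrak{q}}(u_{\mathfrak{q}}^j-u_{\tilde{\mathfrak{q}}}^j)$, control the remainder via the elliptic estimate for $w=u_{\mathfrak{q}}-u_{\tilde{\mathfrak{q}}}$, and absorb it using $\|\tilde{\mathfrak{q}}\|_{L^\infty}\le\varepsilon$. Both approaches rest on the same two pillars---the strict positivity of $u_0$ from the maximum principle, and the uniform $W^{2,r}$ bound near $\mathfrak{q}=0$ supplied by Theorem~\ref{theorem1.1}(ii)---so they are equivalent in content. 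Your approach is more elementary and yields a more explicit handle on the constant $C$ and the radius of $\mathcal{U}$; the paper's is slicker but less constructive.

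One small slip: you write ``choosing $\mathcal{U}$ inside $\mathcal{Q}_\lambda^-$'', but $\mathcal{Q}_\lambda^-$ consists of nonpositive functions only. You mean $\mathcal{Q}_\lambda$, or better, simply the ball $B_{L^\infty(\Omega)}(0,\delta)$ furnished by Theorem~\ref{theorem1.1}(ii), which already gives both the uniform resolvent bound and, via the Lipschitz estimate for $\Psi$ in Subsection~2.2, the uniform lower bound $u_{\mathfrak{q}}\ge\alpha/2$ after shrinking $\delta$.
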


In this theorem the case $\mathfrak{q}\ge 0$, $\tilde{\mathfrak{q}}\ge 0$ and $j=1$ corresponds to a stability inequality for the problem that consists in the determination of the absorption coefficient $\mathfrak{q}$ in the quantitative photoacoustic tomography, from the energy density $H=\mathfrak{q}u_{\mathfrak{q}}$. The case $\mathfrak{q}\ge 0$, $\tilde{\mathfrak{q}}\ge 0$ and $j=2$ gives a stability inequality for the problem of determining the absorption coefficient in the acousto-optic imaging, from the energy density $H=\mathfrak{q}u_{\mathfrak{q}}^2$. While the case $\mathfrak{q}\le 0$, $\tilde{\mathfrak{q}}\le 0$ and $j=2$ contains a stability result for the determination of the permittivity in the microwave imaging, from the energy density $H=\mathfrak{q}u_{\mathfrak{q}}^2$.

Next, we state a local Lipschitz stability estimate that applies to the problem of determining the absorption coefficient in the acousto-optic imaging from the knowledge  of an internal data.  
Prior to doing that, we introduce a definition. Fix $\mu \ge 1$ and $0< \underline{\mathfrak{q}}< \overline{\mathfrak{q}}<\mu^{-1} \lambda_1(\Omega)$,  and define
\[
\mathbf{Q}=\{ \mathfrak{q}\in  L^\infty(\overline{\Omega});\; \underline{\mathfrak{q}}\le \mathfrak{q}\le \overline{\mathfrak{q}}\}.
\]

\begin{theorem}\label{main-theorem3}
Assume that $r=2$ and $f$ satisfies $\mathrm{essinf}\, f=m>0$ on $\Gamma$. Let $\mathfrak{a}\in \mathcal{A}_\mu$. For all $\mathfrak{q},\tilde{\mathfrak{q}}\in \mathbf{Q}$, we have, with $u_{\mathfrak{q}}=u_{\mathfrak{a},\mathfrak{q}}$ and $u_{\tilde{\mathfrak{q}}}=u_{\mathfrak{a},\tilde{\mathfrak{q}}}$,
\[
\| \mathfrak{q}-\tilde{\mathfrak{q}}\|_{L^2(\Omega)}\le C\|\mathfrak{q}u_{\mathfrak{q}}^2-\tilde{\mathfrak{q}}u_{\tilde{\mathfrak{q}}}^2\|_{L^2(\Omega)},
\]
where $C=C(n,\Omega, \mu, \underline{\mathfrak{q}}, \overline{\mathfrak{q}},m)$.
\end{theorem}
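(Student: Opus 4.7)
The plan is to reduce the stability inequality to an $L^2$ bound on $\varphi:=u_\mathfrak{q}-u_{\tilde{\mathfrak{q}}}$, by combining the elliptic equation satisfied by $\varphi$ with a pointwise algebraic identity deduced from the data. Since $\mathfrak{q},\tilde{\mathfrak{q}}\ge\underline{\mathfrak{q}}>0$ and $\mathrm{essinf}\,f = m>0$, the weak maximum principle applied to $u_\mathfrak{q}-m$ and to $u_{\tilde{\mathfrak{q}}}-m$ yields the uniform lower bound $u_\mathfrak{q},u_{\tilde{\mathfrak{q}}}\ge m$ in $\Omega$. Set $\delta\mathfrak{q}:=\mathfrak{q}-\tilde{\mathfrak{q}}$, $q_+:=(\mathfrak{q}+\tilde{\mathfrak{q}})/2$, and $s:=u_\mathfrak{q}+u_{\tilde{\mathfrak{q}}}$. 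Subtracting the two boundary value problems gives
\[
-\operatorname{div}(\mathfrak{a}\nabla\varphi)+q_+\varphi = -\tfrac{1}{2}\delta\mathfrak{q}\,s,\qquad \varphi|_\Gamma = 0,
\]
while a symmetric decomposition of the difference of the data produces the pointwise identity
\[
H-\tilde H = \tfrac{1}{2}\delta\mathfrak{q}\,(u_\mathfrak{q}^2+u_{\tilde{\mathfrak{q}}}^2)+q_+\,s\,\varphi. \qquad(\star)
\]

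Solving $(\star)$ for $\delta\mathfrak{q}$ and substituting into the PDE for $\varphi$, the zero-order coefficient simplifies via $q_+-q_+ s^2/(u_\mathfrak{q}^2+u_{\tilde{\mathfrak{q}}}^2) = -K$, where
\[
K := \frac{2q_+ u_\mathfrak{q} u_{\tilde{\mathfrak{q}}}}{u_\mathfrak{q}^2+u_{\tilde{\mathfrak{q}}}^2},
\]
so that $\varphi\in H_0^1(\Omega)$ is the weak solution of
\[
-\operatorname{div}(\mathfrak{a}\nabla\varphi)-K\varphi = -\frac{s(H-\tilde H)}{u_\mathfrak{q}^2+u_{\tilde{\mathfrak{q}}}^2}.
\]
The crucial observation is that $0\le K\le q_+\le\overline{\mathfrak{q}}$ (using $2u_\mathfrak{q} u_{\tilde{\mathfrak{q}}}\le u_\mathfrak{q}^2+u_{\tilde{\mathfrak{q}}}^2$), so the spectral-gap condition $\overline{\mathfrak{q}}<\mu^{-1}\lambda_1(\Omega)$ built into the definition of $\mathbf{Q}$ forces the associated bilinear form to be coercive. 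Testing with $\varphi$, using the ellipticity of $\mathfrak{a}$ and the Poincar\'e inequality $\|\varphi\|_{L^2}^2\le \lambda_1(\Omega)^{-1}\|\nabla\varphi\|_{L^2}^2$, I obtain
\[
\frac{\mu^{-1}\lambda_1(\Omega)-\overline{\mathfrak{q}}}{\lambda_1(\Omega)}\|\nabla\varphi\|_{L^2}^2 \le \left|\int_\Omega \frac{s(H-\tilde H)\varphi}{u_\mathfrak{q}^2+u_{\tilde{\mathfrak{q}}}^2}\,dx\right|.
\]
The pointwise estimate $s/(u_\mathfrak{q}^2+u_{\tilde{\mathfrak{q}}}^2)\le 1/m$ (which follows from $u_\mathfrak{q}^2+u_{\tilde{\mathfrak{q}}}^2\ge 2u_\mathfrak{q} u_{\tilde{\mathfrak{q}}}$ and $u_\mathfrak{q},u_{\tilde{\mathfrak{q}}}\ge m$), combined with a final application of Poincar\'e, yields $\|\varphi\|_{L^2}\le C_1\|H-\tilde H\|_{L^2}$ with $C_1=C_1(n,\Omega,\mu,\overline{\mathfrak{q}},m)$.

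To conclude, divide $(\star)$ by $u_\mathfrak{q}^2+u_{\tilde{\mathfrak{q}}}^2\ge 2m^2$ and use once more $s/(u_\mathfrak{q}^2+u_{\tilde{\mathfrak{q}}}^2)\le 1/m$ to arrive at the pointwise bound
\[
|\delta\mathfrak{q}|\le m^{-2}|H-\tilde H|+2\overline{\mathfrak{q}}\,m^{-1}|\varphi|,
\]
from which the theorem follows by taking $L^2$ norms and inserting the estimate for $\|\varphi\|_{L^2}$. The main obstacle is choosing the linear combination so that, after eliminating $\delta\mathfrak{q}$, the resulting zero-order coefficient $K$ has an $L^\infty$ norm controlled by $\overline{\mathfrak{q}}$ alone rather than by a ratio involving $\|u_\mathfrak{q}\|_\infty/m$; the symmetric split $\mathfrak{q} = q_+\pm\delta\mathfrak{q}/2$ together with the identity $s^2-(u_\mathfrak{q}^2+u_{\tilde{\mathfrak{q}}}^2)=2u_\mathfrak{q} u_{\tilde{\mathfrak{q}}}$ is exactly what achieves this and is what allows the final constant to depend only on the parameters listed in the statement.
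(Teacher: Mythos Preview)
Your proof is correct and reaches the same conclusion, but the algebraic route differs from the paper's. The paper exploits a \emph{multiplicative} factorisation: writing $\sqrt{v}=\sqrt{\mathfrak{q}}\,u$ and $\sqrt{\tilde v}=\sqrt{\tilde{\mathfrak{q}}}\,\tilde u$, it derives the single identity
\[
\operatorname{div}\bigl(\mathfrak{a}\nabla(u-\tilde u)\bigr)+\sqrt{\mathfrak{q}\tilde{\mathfrak{q}}}\,(u-\tilde u)=(\sqrt{\mathfrak{q}}+\sqrt{\tilde{\mathfrak{q}}})(\sqrt{v}-\sqrt{\tilde v}),
\]
so the zero-order coefficient $\sqrt{\mathfrak{q}\tilde{\mathfrak{q}}}\le\overline{\mathfrak{q}}$ appears immediately, the energy estimate yields $\|u-\tilde u\|_{L^2}\le C\|\sqrt v-\sqrt{\tilde v}\|_{L^2}$, a second identity for $\sqrt{\tilde{\mathfrak{q}}}-\sqrt{\mathfrak{q}}$ closes the loop, and square roots are removed only at the very end. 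You instead use an \emph{additive} symmetrisation $\mathfrak{q}=q_+\pm\tfrac12\delta\mathfrak{q}$ and eliminate $\delta\mathfrak{q}$ between the PDE and the data identity $(\star)$, producing the coefficient $K=2q_+u_\mathfrak{q}u_{\tilde{\mathfrak{q}}}/(u_\mathfrak{q}^2+u_{\tilde{\mathfrak{q}}}^2)\le q_+\le\overline{\mathfrak{q}}$. The two schemes are equivalent in spirit---both engineer a zero-order term bounded by $\overline{\mathfrak{q}}$ so that the spectral gap $\overline{\mathfrak{q}}<\mu^{-1}\lambda_1(\Omega)$ forces coercivity---but yours works directly with $H-\tilde H$ and never passes through square roots, at the price of one extra substitution step. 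A side effect is that your final constant does not involve $\underline{\mathfrak{q}}$ at all, whereas the paper's does (through $\sqrt v\ge\sqrt{\underline{\mathfrak{q}}}\,m$).

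One caveat you share with the paper: the assertion $u_\mathfrak{q}\ge m$ is not a consequence of the weak maximum principle applied to $u_\mathfrak{q}-m$. Since $-\operatorname{div}(\mathfrak{a}\nabla(u_\mathfrak{q}-m))+\mathfrak{q}(u_\mathfrak{q}-m)=-\mathfrak{q}m\le 0$, the function $u_\mathfrak{q}-m$ is a \emph{sub}solution, which yields an upper bound rather than a lower one (indeed, for $-u''+\mathfrak{q}u=0$ with constant boundary data $m$, the solution is strictly convex and dips below $m$). The paper invokes the same bound with the same brevity; a uniform positive lower bound for $u_\mathfrak{q}$ does hold, but it requires a Harnack argument of the type carried out in Lemma~\ref{lemma-pos}. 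Your argument goes through verbatim once $m$ is replaced by that uniform lower bound.
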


Theorem \ref{main-theorem1} and \ref{main-theorem3} are to our knowledge new. Theorem \ref{main-theorem1} is in some sense optimal but cannot be extended in a neighborhood of an arbitrary $\mathfrak{q}$. The case $q\le 0$ and $j=2$ has already be considered. Different H\"older stability inequalities were established by Choulli and Triki \cite{CT1,CT2}.  Alessandrini \cite{Al} proved a H\"older stability inequality  in any interior subdomain for a set of unknown  coefficients $\mathfrak{q}$ defined by an implicit relation (Alessandrini \cite[Inequality (1.5)]{Al}), from the knowledge of the energy density $\mathfrak{q}u_{\mathfrak{q}}^2$.

\subsection{Conditional H\"older  stability}

Pick two constants $0<\mathfrak{q}_-\le \mathfrak{q}_+<\lambda_1(\Omega )$ and set 
\[
\mathcal{Q}=\{\mathfrak{q}\in L^\infty (\Omega);\; \mathfrak{q}_-\le -\mathfrak{q}\le \mathfrak{q}_+\}.
\]

Fix $\varrho >0$ sufficiently large in such a way that
\[
\mathcal{Q}_\varrho=\{ \mathfrak{q}\in \mathcal{Q}\cap C^{0,1}(\overline{\Omega});\; \|\mathfrak{q}\|_{C^{0,1}(\overline{\Omega})}\le \varrho\}\ne \emptyset .
\]

Let $\mathfrak{q}\in \mathcal{Q}$ and $r\ge 2$. Noting that $(I_n,\mathfrak{q})\in \mathcal{D}$, we deduce from Corollary \ref{corollary1.1} (Subsection \ref{subsection2.1}) that the BVP \eqref{e1}, in which we take $\mathfrak{a}=I_n$, has unique solution $u_{\mathfrak{q}}\in W^{2,r}(\Omega)$.

\begin{theorem}\label{theorem-hs1}
Suppose that $\Omega$ is of class $C^3$, $r>n$, $\mathfrak{a}=I_n$ and $f\in H^{5/2}(\Gamma)\cap W^{2-1/r,r}(\Gamma)$ is so that $f>0$ on $\Gamma$. Then,  for any $\mathfrak{q},\tilde{\mathfrak{q}}\in \mathcal{Q}_\varrho$, we have
\begin{equation}\label{hs2}
\|\mathfrak{q}-\tilde{\mathfrak{q}}\|_{L^\infty (\Omega)}\le C\varrho^{1-\gamma}\|u_{\mathfrak{q}}-u_{\tilde{\mathfrak{q}}}\|_{L^2(\Omega)}^{\gamma /3},
\end{equation}
where $C=C(n,\Omega,\mathfrak{q}_-,\mathfrak{q}_+,f,r)>0$ and $0<\gamma=\gamma (n,\Omega,\mathfrak{q}_-,\mathfrak{q}_+,f)<1$ are constants.
\end{theorem}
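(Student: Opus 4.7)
The plan is to exploit the Schr\"odinger relation $\mathfrak{q}=\Delta u_{\mathfrak{q}}/u_{\mathfrak{q}}$, which is meaningful once $u_{\mathfrak{q}}$ is bounded away from zero, and then to interpolate an a priori regularity bound on $v:=u_{\mathfrak{q}}-u_{\tilde{\mathfrak{q}}}$ against the $L^{2}$ datum. First I would show that $\mathfrak{q}_{+}<\lambda_{1}(\Omega)$ together with $f>0$ produces a uniform positive lower bound $u_{\mathfrak{q}}\ge c_{0}>0$ on $\overline{\Omega}$ for every $\mathfrak{q}\in\mathcal{Q}$, with $c_{0}$ depending only on $n,\Omega,\mathfrak{q}_{-},\mathfrak{q}_{+},f$: the principal Dirichlet eigenvalue of $-\Delta+\mathfrak{q}$ is strictly positive, so the strong maximum principle (together with Hopf's lemma) applies to $u_{\mathfrak{q}}$, and continuity of $u_{\mathfrak{q}}$ up to $\Gamma$ is provided by Corollary~\ref{corollary1.1} together with the Sobolev embedding $W^{2,r}(\Omega)\hookrightarrow C(\overline{\Omega})$ for $r>n/2$.

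Subtracting the two boundary value problems, $v$ solves $-\Delta v+\mathfrak{q} v=-(\mathfrak{q}-\tilde{\mathfrak{q}})u_{\tilde{\mathfrak{q}}}$ in $\Omega$ with $v|_{\Gamma}=0$, from which the pointwise identity
\[
\mathfrak{q}-\tilde{\mathfrak{q}}=\frac{\Delta v-\mathfrak{q} v}{u_{\tilde{\mathfrak{q}}}}
\]
follows, yielding in particular $\|\mathfrak{q}-\tilde{\mathfrak{q}}\|_{L^{2}(\Omega)}\le C\|v\|_{H^{2}(\Omega)}$. Since $\Omega$ is of class $C^{3}$, $f\in H^{5/2}(\Gamma)$, and $\mathfrak{q},\tilde{\mathfrak{q}}\in C^{0,1}(\overline{\Omega})\subset H^{1}(\Omega)$ with norm $\le\varrho$, iterated elliptic regularity (first producing $u_{\mathfrak{q}},u_{\tilde{\mathfrak{q}}}\in H^{3}$ with $H^{3}$-norm controlled by $C\varrho$, then transferring this to the right-hand side of the $v$-equation) delivers the a priori bound $\|v\|_{H^{3}(\Omega)}\le C\varrho$, uniformly in $\mathcal{Q}_{\varrho}$.

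The classical Hilbert-space interpolation $\|v\|_{H^{2}}\le C\|v\|_{L^{2}}^{1/3}\|v\|_{H^{3}}^{2/3}$ is the origin of the $1/3$-power and gives $\|\mathfrak{q}-\tilde{\mathfrak{q}}\|_{L^{2}(\Omega)}\le C\varrho^{2/3}\|v\|_{L^{2}(\Omega)}^{1/3}$. Combining this with the membership $\mathfrak{q}-\tilde{\mathfrak{q}}\in C^{0,1}(\overline{\Omega})$ with norm $\le 2\varrho$ via the Gagliardo--Nirenberg-type interpolation
\[
\|w\|_{L^{\infty}(\Omega)}\le C\|w\|_{L^{2}(\Omega)}^{\sigma}\|w\|_{C^{0,1}(\overline{\Omega})}^{1-\sigma},\qquad w=\mathfrak{q}-\tilde{\mathfrak{q}},\ \sigma=\sigma(n)\in(0,1),
\]
and collecting the powers of $\varrho$ and $\|v\|_{L^{2}}$ yields the advertised H\"older inequality
\[
\|\mathfrak{q}-\tilde{\mathfrak{q}}\|_{L^{\infty}(\Omega)}\le C\varrho^{1-\gamma}\|v\|_{L^{2}(\Omega)}^{\gamma/3}
\]
for a suitable $\gamma=\gamma(n,\Omega,\mathfrak{q}_{-},\mathfrak{q}_{+},f)\in(0,1)$ encoding the various interpolation exponents.

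The main obstacle is the careful tracking of the $\varrho$-dependence through the regularity bootstrap and the subsequent cascade of interpolations $L^{2}\to H^{2}\to H^{3}$ and $L^{2}\to L^{\infty}$: the a priori $H^{3}$-bound must be obtained with precisely linear dependence on $\varrho$ from boundary data only of class $H^{5/2}$ on a $C^{3}$-domain, and the final combination must be arranged so that the exponents on $\varrho$ and $\|v\|_{L^{2}}$ emerge in the advertised form $1-\gamma$ and $\gamma/3$ rather than a coarser one.
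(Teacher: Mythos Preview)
Your overall strategy is sound and yields the theorem, but it differs from the paper's proof in one essential ingredient: you rely on a uniform pointwise lower bound $u_{\mathfrak{q}}\ge c_{0}>0$ on $\overline{\Omega}$ in order to divide by $u_{\tilde{\mathfrak{q}}}$ and then close with a standard Gagliardo--Nirenberg interpolation $\|w\|_{L^\infty}\le C\|w\|_{L^2}^{\sigma}\|w\|_{C^{0,1}}^{1-\sigma}$. The paper instead never asserts such a lower bound; it uses a \emph{weighted} interpolation inequality (Corollary~\ref{corollary-wii}, imported from \cite{CT2}) of the form
\[
\|\phi\|_{L^\infty(\Omega)}\le C\,\|\phi\|_{C^{0,1}(\overline{\Omega})}^{1-\mu}\,\|\phi\,u_{\mathfrak{q}}^{2}\|_{L^1(\Omega)}^{\mu},
\]
whose proof rests on quantitative unique continuation (doubling inequalities) for solutions of $-\Delta u+\mathfrak{q}u=0$. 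The paper then estimates $\|(\mathfrak{q}-\tilde{\mathfrak{q}})u\|_{L^2}$ by the same $H^3$/$H^2$ interpolation you use. So both proofs share the $H^2\!-\!H^3$ step producing the exponent $1/3$; they diverge only in how the $L^\infty$ norm of $\mathfrak{q}-\tilde{\mathfrak{q}}$ is recovered from weighted information.

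Your route is more elementary and avoids the machinery of \cite{CT2}, but the step ``the principal Dirichlet eigenvalue of $-\Delta+\mathfrak{q}$ is strictly positive, so the strong maximum principle (together with Hopf's lemma) applies'' deserves more care. Since here $\mathfrak{q}<0$, the classical strong maximum principle does not apply directly; you need the refined maximum principle valid when the principal eigenvalue is positive, and that only gives $u_{\mathfrak{q}}>0$ for each fixed $\mathfrak{q}$. Uniformity of $c_{0}$ over all $\mathfrak{q}\in\mathcal{Q}$ requires an additional compactness argument (the family $\{u_{\mathfrak{q}}\}$ is bounded in $W^{2,r}$, hence precompact in $C(\overline{\Omega})$, and any limit again solves a Helmholtz equation with coefficient in $\mathcal{Q}$). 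The paper's weighted inequality sidesteps this issue entirely and would remain applicable in situations where solutions may vanish in the interior; your argument, when it works, has the advantage of producing a H\"older exponent $\gamma$ depending only on $n$ rather than on $\mathfrak{q}_{\pm}$ and $f$. Finally, note that in both approaches the $H^{3}$ bound on $u_{\mathfrak{q}}$ actually carries a factor of $\varrho$ (through $\|\mathfrak{q}u\|_{H^1}$), so the bookkeeping of the $\varrho$-power is slightly looser than the clean $\varrho^{1-\gamma}$ in either proof; this is cosmetic and does not affect correctness.
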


We consider more general $\mathfrak{a}$ in Theorem \ref{theorem-hs3} below.

\begin{theorem}\label{theorem-hs3}
Assume that $\Omega$ is of class $C^3$ and $f\in H^{5/2}(\Gamma)$ is non constant. Let $\omega \Subset \Omega$, $(\lambda ,\mu)\in\mathcal{J}$ and $\varrho >0$. Then, for any $(\mathfrak{a},\mathfrak{q}), (\tilde{\mathfrak{a}},\mathfrak{q})\in \mathcal{D}_{\lambda ,\mu}^\bullet$ so that $\mathfrak{q}\in C^{0,1}(\overline{\Omega})$, $\| \mathfrak{a}\|_{C^{0,1}(\overline{\Omega})}\le \varrho$, $\| \tilde{\mathfrak{a}}\|_{C^{0,1}(\overline{\Omega})}\le \varrho$ and $ \mathfrak{a}=\tilde{\mathfrak{a}}$ on $\Gamma$, we have, with $u_{\mathfrak{a}}=u_{\mathfrak{a},\mathfrak{q}}$ and $u_{\tilde{\mathfrak{a}}}=u_{\tilde{\mathfrak{a}}, \mathfrak{q}}$,
\[
\|\mathfrak{a}-\tilde{\mathfrak{a}}\|_{C(\overline{\omega})}\le C\varrho^{1-\gamma}\|u_{\mathfrak{a}}-u_{\tilde{\mathfrak{a}}}\|_{L^2(\Omega)}^{\gamma/3},
\]
where  $C=C(n,\Omega, \omega ,\lambda ,\mu, \mathfrak{q} ,f)$ and $0<\gamma =\gamma(n,\Omega, \omega ,\lambda ,\mu ,f)$ are constants.
\end{theorem}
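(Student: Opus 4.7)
Write $\alpha=\mathfrak{a}-\tilde{\mathfrak{a}}$, $w=u_{\mathfrak{a}}-u_{\tilde{\mathfrak{a}}}$ and $v=u_{\tilde{\mathfrak{a}}}$. Subtracting the two BVPs and using the identity $\mathfrak{a}\nabla u_{\mathfrak{a}}-\tilde{\mathfrak{a}}\nabla v=\mathfrak{a}\nabla w+\alpha\nabla v$ produces
\[
\nabla v\cdot\nabla\alpha+(\Delta v)\alpha=\mathfrak{q}w-\mathrm{div}(\mathfrak{a}\nabla w)\quad\text{in }\Omega,\qquad \alpha|_{\Gamma}=0,
\]
a linear first-order transport equation for $\alpha$ with drift $\nabla v$ and a right-hand side of the order of the second derivatives of $w$. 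My plan is (i) to turn this into a pointwise bound for $\alpha$ on $\overline{\omega}$ in terms of a strong Sobolev norm of $w$, and (ii) to interpolate that strong norm against the $L^{2}(\Omega)$ measurement by combining interior $H^{3}$-regularity of $w$ with a three-sphere-type inequality.

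\textbf{Step (i): transport equation for $\alpha$.}
Since $f$ is non-constant, standard unique continuation for the elliptic operator $-\mathrm{div}(\tilde{\mathfrak{a}}\nabla\cdot)+\mathfrak{q}$ prevents $v$ from being locally constant, so $\nabla v$ is qualitatively non-degenerate. Combining this with $\alpha|_{\Gamma}=0$, either by integration of the transport equation along characteristics of $\nabla v$ (treated carefully near the critical set of $v$) or by a Carleman estimate adapted to the transport equation, I expect to extract, for suitable $\omega'\Supset\omega$ and $p>n$, an estimate of the form
\[
\|\alpha\|_{C(\overline{\omega})}\le C\,\|w\|_{W^{2,p}(\omega')},
\]
with $C=C(n,\Omega,\omega,\omega',\lambda,\mu,\mathfrak{q},f)$ independent of $\varrho$.

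\textbf{Step (ii): interior regularity and three-sphere.}
Because $\Omega\in C^{3}$ and $\mathfrak{a},\tilde{\mathfrak{a}}\in C^{0,1}(\overline{\Omega})$ with $C^{0,1}$-norms bounded by $\varrho$, classical interior elliptic regularity applied to the divergence-form equation satisfied by $w$ gives $\|w\|_{H^{3}(\omega'')}\le C\varrho$ on an intermediate $\omega''\Supset\omega'$. A three-sphere (or Carleman) inequality for $-\mathrm{div}(\mathfrak{a}\nabla\cdot)+\mathfrak{q}$ propagated from $\Omega$ to $\omega''$ produces
\[
\|w\|_{L^{2}(\omega'')}\le C\,\varrho^{1-\gamma_{0}}\|w\|_{L^{2}(\Omega)}^{\gamma_{0}}
\]
for some $\gamma_{0}\in(0,1)$ depending only on the data. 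The convexity inequality $\|w\|_{H^{2}(\omega')}\le C\|w\|_{L^{2}(\omega'')}^{1/3}\|w\|_{H^{3}(\omega'')}^{2/3}$, together with the Sobolev embedding $H^{2}\hookrightarrow W^{2,p}$ (or an obvious $W^{3,p}/W^{2,p}$ variant when $n$ is large), then feeds into the estimate of Step (i) and, after relabelling $\gamma$, yields the H\"older inequality of the theorem with exponents of the announced form.

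\textbf{Main obstacle.}
The decisive difficulty is Step (i): the drift $\nabla v$ has no quantitative lower bound, and the critical set of $v$ is a priori of arbitrary geometry. The non-constancy of $f$ is only a qualitative assumption, so the method of characteristics degenerates in general. The core of the proof will therefore be a compactness-plus-unique-continuation argument that absorbs the critical set of $v$ into the constant $C$; this is precisely what forces $C$ to depend on $\mathfrak{q}$ and $f$ in a non-explicit fashion, and it is the origin of the H\"older (rather than Lipschitz) nature of the final inequality.
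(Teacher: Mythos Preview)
Your Step~(i) is the genuine gap, and you correctly flag it as the obstacle but do not resolve it. Neither integration along characteristics nor a Carleman estimate for a first-order transport equation with drift $\nabla v$ can be carried out under the sole qualitative hypothesis that $f$ is non-constant: there is no quantitative lower bound on $|\nabla v|$, and the geometry of its critical set is uncontrolled. The phrase ``compactness-plus-unique-continuation'' does not name an argument that produces a uniform bound $\|\alpha\|_{C(\overline{\omega})}\le C\|w\|_{W^{2,p}(\omega')}$; in particular, characteristics starting in $\omega$ need not reach $\Gamma$ at all.

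The paper bypasses the transport equation entirely. Writing $u=u_{\mathfrak a}$, it multiplies the (scalar) identity
\[
\mathrm{div}\bigl(|\mathfrak a-\tilde{\mathfrak a}|\nabla u\bigr)=\mathrm{sgn}_0(\mathfrak a-\tilde{\mathfrak a})\bigl[\mathfrak q(u-\tilde u)+\mathrm{div}(\tilde{\mathfrak a}\nabla(u-\tilde u))\bigr]
\]
by $u$ and integrates over $\Omega$; the boundary term vanishes because $\mathfrak a=\tilde{\mathfrak a}$ on $\Gamma$, yielding directly
\[
\bigl\|\,|\mathfrak a-\tilde{\mathfrak a}|\,|\nabla u|^2\,\bigr\|_{L^1(\Omega)}\le C\|u-\tilde u\|_{H^2(\Omega)}.
\]
The degeneracy of $\nabla u$ is then absorbed by a \emph{weighted interpolation inequality}
\[
\|\mathfrak a-\tilde{\mathfrak a}\|_{C(\overline{\omega})}\le C\,\|\mathfrak a-\tilde{\mathfrak a}\|_{C^{0,1}(\overline{\Omega})}^{1-\gamma}\,\bigl\|\,|\mathfrak a-\tilde{\mathfrak a}|\,|\nabla u|^2\,\bigr\|_{L^1(\Omega)}^{\gamma},
\]
whose proof rests on the Garofalo--Lin doubling inequality for solutions of $\mathrm{div}(\mathfrak a\nabla u)=\mathfrak q u$. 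This is where the non-constancy of $f$ enters quantitatively, and it is why the estimate is only on $\overline{\omega}\Subset\Omega$. The factor $\varrho^{1-\gamma}$ comes from the $C^{0,1}$ bound on $\mathfrak a-\tilde{\mathfrak a}$, and the exponent $\gamma/3$ from the \emph{global} interpolation $\|w\|_{H^2(\Omega)}\le C\|w\|_{H^3(\Omega)}^{2/3}\|w\|_{L^2(\Omega)}^{1/3}$ together with the uniform $H^3(\Omega)$ bound on $u,\tilde u$ (available because $\Omega\in C^3$ and $f\in H^{5/2}(\Gamma)$).

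Two smaller issues in your Step~(ii): the three-sphere inequality is superfluous, since $\omega''\subset\Omega$ already gives $\|w\|_{L^2(\omega'')}\le\|w\|_{L^2(\Omega)}$; and the embedding $H^2(\omega')\hookrightarrow W^{2,p}(\omega')$ for $p>n$ fails, while the ``$W^{3,p}$ variant'' you allude to would require more regularity on $\mathfrak a,\tilde{\mathfrak a}$ than the assumed $C^{0,1}$.
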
 

The preceding two theorems can be used to obtain stability inequalities for the quantitative dynamic elastography (at least for the simplified model described above).
Theorem \ref{theorem-hs1} can be interpreted as conditional stability estimate of the problem of recovering  the density from the knowledge of  tissue displacement, assuming that the shear modulus is known and it is identically equal to $1$. While Theorem \ref{theorem-hs3} gives an interior H\"older stability estimate of recovering the shear modulus when the density is supposed to be known. Here again the internal data consists in the tissue displacement.

It is worth pointing out that local H\"older stability inequalities were established by Bonito, Cohen, DeVore, Petrova and Welper \cite{BCDPW} for the problem of determining the conductivity in the equation
\[
-\mathrm{div}(\mathfrak{a}\nabla u)=f\quad \mbox{in}\; \Omega,\quad u_{|\Gamma}=0,
\]
from the knowledge of the corresponding solution, $u=u_{\mathfrak{a}}$. The approach carried out in  \cite{BCDPW} in quite different from the one we used in the present paper.

Fix $0<\beta <1$ and, for  $0<\varkappa \le \Lambda$, define then $\mathscr{D}_{\varkappa ,\Lambda}$ as the set of couples $(\mathfrak{a},\mathfrak{q})$ satisfying $\mathfrak{a}\in C^{1,\beta}(\overline{\Omega})$, $\mathfrak{q}\in C^{0,\beta}(\overline{\Omega})$ and
\[
\mathfrak{q}\ge 0,\quad \mathfrak{a}\ge \varkappa \quad \mbox{and}\quad \|\mathfrak{a}\|_{C^{1,\beta}(\overline{\Omega})}+\|\mathfrak{q}\|_{C^{0,\beta}(\overline{\Omega})}\le \Lambda .
\]

Suppose that $f\in C^{2,\beta}(\Gamma)$. Then, in light of Gilbarg and Trudinger \cite[Theorem 6.6, page 98 and Theorem 6.14, page 107]{GT}, the BVP \eqref{e1} admits a unique solution $u_{\mathfrak{a},\mathfrak{q}}(f)\in C^{2,\beta}(\overline{\Omega})$ so that
\begin{equation}\label{he}
\|u_{\mathfrak{a},\mathfrak{q}}(f)\|_{C^{2,\beta}(\overline{\Omega})}\le K,\quad \mbox{for all}\; (\mathfrak{a},\mathfrak{q})\in \mathscr{D}_{\varkappa ,\Lambda} ,
\end{equation}
where $K=K(n,\Omega ,\beta ,\varkappa ,\Lambda,f)$ is a constant.

\begin{theorem}\label{theorem-hs2}
Let $f_1,f_2\in C^{2,\beta}(\Gamma)$ with $f_1>0$.  Assume that $\Omega$ is of class $C^{2,\beta}$, $h=f_2/f_1$ is non constant and  the set of critical points of $h$ consists  of its extrema. For all $(\mathfrak{a},\mathfrak{q}),(\tilde{\mathfrak{a}},\tilde{\mathfrak{q}})\in \mathscr{D}_{\varkappa ,\Lambda}$ satisfying $(\mathfrak{a},\mathfrak{q})=(\tilde{\mathfrak{a}},\tilde{\mathfrak{q}})$ on $\Gamma$, we have, with $u_j=u_{\mathfrak{a},\mathfrak{q}}(f_j)$ and $\tilde{u}_j=u_{\tilde{\mathfrak{a}},\tilde{\mathfrak{q}}}(f_j)$, $j=1,2$,
\[
\|\mathfrak{a}-\tilde{\mathfrak{a}}\|_{C^{1,\beta}(\overline{\Omega})}+\|\mathfrak{q}-\tilde{\mathfrak{q}}\|_{C^{0,\beta}(\overline{\Omega})}\le \left(\|u_1-\tilde{u}_1\|_{C(\overline{\Omega})}+\|u_2-\tilde{u}_2\|_{C(\overline{\Omega})}\right)^\gamma ,
\]
where  $C=C(n,\Omega  ,\beta ,\varkappa ,\Lambda ,f_1,f_2)>0$ and $0<\gamma=\gamma (n,\Omega  ,\beta ,\varkappa ,\Lambda ,f_1,f_2)<1$ are constants.
\end{theorem}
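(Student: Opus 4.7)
The strategy is to exploit the two measurements via the quotient $v=u_2/u_1$ (and similarly $\tilde v = \tilde u_2/\tilde u_1$), the standard tool for two-measurement inverse problems. The strong maximum principle applied to the equation for $u_1$ (with $\mathfrak q\ge 0$, $\mathfrak a\ge\varkappa>0$ and $f_1>0$ on $\Gamma$) together with the a priori estimate \eqref{he} yields a uniform lower bound $u_1,\tilde u_1 \ge c>0$ on $\overline\Omega$, so that $v,\tilde v$ are well defined and inherit uniform $C^{2,\beta}(\overline\Omega)$ bounds. Standard Hölder interpolation then upgrades the hypothesis into
$$\|u_j-\tilde u_j\|_{C^{2,\alpha}(\overline\Omega)} + \|v-\tilde v\|_{C^{2,\alpha}(\overline\Omega)} \le C\,\eta^{1-\alpha/\beta},\qquad \eta := \|u_1-\tilde u_1\|_{C(\overline\Omega)} + \|u_2-\tilde u_2\|_{C(\overline\Omega)},$$
for any fixed $0<\alpha<\beta$. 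A direct computation, using the symmetry of $\mathfrak a$ and the equations satisfied by $u_1,u_2$, shows that $v$ solves the pure conductivity equation $\mathrm{div}(u_1^2\mathfrak a\,\nabla v) = 0$ in $\Omega$ with $v_{|\Gamma}=h=f_2/f_1$, and the analogous equation with weight $\tilde u_1^2\tilde{\mathfrak a}$ holds for $\tilde v$ with the same boundary data $h$.

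The key geometric input is then an Alessandrini-type absence of interior critical points: since the critical points of $h$ consist of its extrema, the $C^{2,\beta}(\overline\Omega)$-solution $v$ of the above uniformly elliptic equation with boundary data $h$ admits a uniform lower bound $|\nabla v|\ge\kappa>0$ on $\overline\Omega$, with $\kappa$ depending only on the data. Set $W := u_1^2\mathfrak a - \tilde u_1^2\tilde{\mathfrak a}$; note that $W_{|\Gamma}=0$ since $\mathfrak a=\tilde{\mathfrak a}$ and $u_1=\tilde u_1$ on $\Gamma$. Subtracting the two divergence equations and expanding produces
$$\nabla v\cdot\nabla W + (\Delta v)\,W = \mathrm{div}\bigl(\tilde u_1^2\tilde{\mathfrak a}\,\nabla(\tilde v - v)\bigr),$$
a linear first-order equation for $W$ along the gradient curves of $v$, whose right-hand side has $C^{0,\alpha}$ norm controlled by $\|v-\tilde v\|_{C^{2,\alpha}(\overline\Omega)}$. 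Because $|\nabla v|\ge\kappa$, every flow line of $\nabla v/|\nabla v|^2$ reaches $\Gamma$ in uniformly bounded time; integrating this ODE along these characteristics starting from $\Gamma$, where $W=0$, yields $\|W\|_{C(\overline\Omega)}\le C\eta^{1-\alpha/\beta}$. The lower bound on $u_1$ together with $\|u_1-\tilde u_1\|_{C(\overline\Omega)}\le\eta$ then converts this into a $C(\overline\Omega)$-bound for $\mathfrak a - \tilde{\mathfrak a}$.

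To recover $\mathfrak q - \tilde{\mathfrak q}$ one solves for $\mathfrak q$ in the equation for $u_1$, giving $\mathfrak q = u_1^{-1}(\mathfrak a\Delta u_1 + \nabla\mathfrak a\cdot\nabla u_1)$; subtracting the analogous identity for $\tilde{\mathfrak q}$, the difference $\mathfrak q - \tilde{\mathfrak q}$ is controlled by a linear combination of $\|\mathfrak a-\tilde{\mathfrak a}\|_{C^1(\overline\Omega)}$ and $\|u_1-\tilde u_1\|_{C^2(\overline\Omega)}$, both of which have already been estimated in Hölder scale. A final round of interpolation between the $C(\overline\Omega)$ bounds just obtained and the a priori $C^{1,\beta}\times C^{0,\beta}$ bounds furnished by $\mathscr D_{\varkappa,\Lambda}$ delivers the announced Hölder stability with a possibly smaller exponent $\gamma\in(0,1)$. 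The main obstacle in the plan is the quantitative non-vanishing $|\nabla v|\ge\kappa$: this Alessandrini-type step is where the critical-point hypothesis on $h$ enters decisively, and it is what ultimately dictates the Hölder exponent in the final estimate.
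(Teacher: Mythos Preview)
Your overall architecture (form the quotient $v=u_2/u_1$, reduce to a pure conductivity equation $\mathrm{div}(u_1^2\mathfrak a\,\nabla v)=0$, recover first $\mathfrak a$ then $\mathfrak q$, and interpolate) matches the paper's. The gap is in the step you yourself flag as ``the main obstacle'': the claimed uniform lower bound $|\nabla v|\ge\kappa$ on all of $\overline{\Omega}$. This Alessandrini-type statement is a two-dimensional phenomenon; in dimension $n\ge 3$ it is known to fail in general, even for boundary data whose only critical points are the global extrema. Since the theorem is stated for arbitrary $n\ge 2$, your method of characteristics breaks down: without a pointwise lower bound on $|\nabla v|$, flow lines of $\nabla v$ need not reach $\Gamma$, and the transport argument for $W$ cannot be run.

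The paper deals with exactly this difficulty by \emph{not} asserting a global pointwise lower bound. Instead it proves (Proposition~\ref{proposition-glb}, by a compactness argument using Hopf's lemma at the extrema and the tangential gradient elsewhere) that $|\nabla v|\ge\eta$ holds uniformly on $\Gamma$, and hence on a boundary collar $\Omega_\delta$. In the interior $\Omega^\delta$ the pointwise bound is replaced by a quantitative unique-continuation/doubling estimate of the form $\|\nabla v\|_{L^2(B(x,\rho))}^2\ge C\rho^{\upsilon}$. These two ingredients are combined in Lemma~\ref{lemma-wii} into a weighted interpolation inequality
\[
\|\phi\|_{C(\overline{\Omega})}\le C\,\|\phi\|_{C^{0,\beta}(\overline{\Omega})}^{1-\gamma}\,\bigl\|\phi\,|\nabla v|\bigr\|_{L^1(\Omega)}^{\gamma},
\]
which plays the role your transport estimate would have played, but is robust to interior critical points of $v$. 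This inequality, applied with $\phi=|\mathfrak a u_1^2-\tilde{\mathfrak a}\tilde u_1^2|$ (or a suitable variant), is what drives the H\"older stability. If you want to repair your argument in arbitrary dimension, you need to replace the characteristics step by this interpolation mechanism; your plan as written is only valid when $n=2$.
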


In the case of dimension two functions called almost two-to-one, as it is defined in Nachman, Tamasan and Timonov \cite{NTT}, have no other critical points  than their extrema. A larger class of functions admitting such property consists in quantitatively unimodal functions. (e.g. Alessandrini and Nesi \cite{AN} for a precise definition). 

Theorem \ref{theorem-hs2} establishes conditional H\"older stability estimate of the quantitative photoacoustic tomopography consisting in determining simultaneously  the diffusion and the absorption coefficients from two internal energy densities, corresponding to two well-chosen illuminations. This theorem was already established in Bonnetier, Choulli and Triki \cite{BCT} when the two illuminations are generated from two point sources located outside the medium. 

Theorem \ref{theorem-hs2} improve the one  obtained in Alessandrini, Di Cristo, Francini and Vessella \cite{ADFV} under the assumptions that $h$ is quantitatively unimodal and $\Omega$ is diffeomorphic to the unit ball. We are convinced  that the hypotheses of Theorem \ref{theorem-hs2} on the boundary data are the best possible guaranteeing H\"older's type stability inequality in the whole domain.

The key point in the proof of Theorem \ref{theorem-hs2} is a uniform lower bound for the gradient that we obtain by a compactness argument (see Proposition \ref{proposition-glb}).

The rest of this text is devoted to the proof of the results stated in this introduction. The local Lipschitz stability inequalities are proved in Section \ref{section2}. While the proofs of the conditional H\"older stability inequalities are given in Section \ref{section3}.

The technical results we used, without a proof, in Section \ref{section3} were already established in Bonnetier, Choulli and Triki \cite{BCT}, Choulli and Triki \cite{CT1} and Choulli and Triki \cite{CT2}. These references contain a detailed proof of these technical results.

For simplicity reason we only considered the case when the bilinear form
\[
\mathfrak{b}(u,v)=\int_\Omega \mathfrak{a}\nabla u\cdot\nabla vdx+\int_\Omega \mathfrak{q}uvdx
\]
is coercive on $H_0^1(\Omega )$. However the non homogenous BVP \eqref{e1} is solvable provided that $0$ is not an eigenvalue of the operator $-\mathrm{div}(\mathfrak{a}\nabla \cdot )+\mathfrak{q}$, under Dirichlet boundary condition, and this property remains true if $(\mathfrak{a},\mathfrak{q})$ is substituted by $(\mathfrak{\tilde{a}},\tilde{\mathfrak{q}})$ sufficiently ``close''  to $(\mathfrak{a},\mathfrak{q})$. We believe that the results obtained in the present work may be extended, at least partially, to coefficients $(\mathfrak{a},\mathfrak{q})$ so that $0$ is not an eigenvalue of the operator $-\mathrm{div}(\mathfrak{a}\nabla \cdot )+\mathfrak{q}$, under Dirichlet boundary condition.

\section{Local Lipschitz stability inequalities}\label{section2}

\subsection{Solvability of the BVP \eqref{e1} in $W^{2,r}(\Omega)$} \label{subsection2.1}

It is contained in the following theorem. 

\begin{theorem}\label{theorem1.1}
Let  $r\ge 2$.  
\\
$\mathrm (i)$ For any $(\mathfrak{a},\mathfrak{q})\in \mathcal{D}$, the linear mapping
\[
\mathcal{P}_{\mathfrak{a},\mathfrak{q}}:u\in W^{2,r}(\Omega)\mapsto (-\mathrm{div}(\mathfrak{a}\nabla u)+\mathfrak{q}u,u_{|\Gamma})\in L^r(\Omega )\times W^{2-1/r,r}(\Gamma)
\]
is an isomorphism.
\\
$\mathrm (ii)$ Let  $(\mathfrak{a}_0,\mathfrak{q}_0)\in \mathcal{D}$. Then there exists  a constant $\delta=\delta (\mathfrak{a}_0,\mathfrak{q}_0)>0$ so that, for any $\mathfrak{q}\in  B_{L^\infty(\Omega)}(0,\delta)$, 
\[
\mathcal{P}_{\mathfrak{a}_0,\mathfrak{q}_0+\mathfrak{q}}:u\in W^{2,r}(\Omega)\mapsto (-\mathrm{div}(a\nabla u)+\mathfrak{q}u,u_{|\Gamma})\in L^r(\Omega )\times W^{2-1/r,r}(\Gamma)
\]
is an isomorphism with
\[
\|\mathcal{P}_{\mathfrak{a}_0,\mathfrak{q}_0+\mathfrak{q}}^{-1}\|_{\mathrm{op}}\le 2\|\mathcal{P}_{\mathfrak{a}_0,\mathfrak{q}_0}^{-1}\|_{\mathrm{op}},\quad \mbox{for all}\; \mathfrak{q}\in  B_{L^\infty(\Omega)}(0,\delta).
\]
\end{theorem}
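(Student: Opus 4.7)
The plan splits naturally into the two parts. For part (i), I would establish that $\mathcal{P}_{\mathfrak{a},\mathfrak{q}}$ is bounded, injective, and surjective, then invoke the open mapping theorem for continuity of the inverse. Boundedness is immediate: since $\mathfrak{a}\in C^{0,1}(\overline{\Omega},\mathbb{R}^{n\times n})$, one may expand $-\mathrm{div}(\mathfrak{a}\nabla u)$ into a combination of entries of $\mathfrak{a}$ against $\nabla^2u$ and entries of $\nabla\mathfrak{a}\in L^\infty$ against $\nabla u$, and combined with the trace theorem $W^{2,r}(\Omega)\to W^{2-1/r,r}(\Gamma)$ this gives the required continuous estimate.

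For injectivity, the key is that $(\mathfrak{a},\mathfrak{q})\in\mathcal{D}$ means $(\mathfrak{a},\mathfrak{q})\in\mathcal{D}_{\mu,\lambda}$ for some $(\mu,\lambda)\in\mathcal{J}$, so $\mu\lambda<\lambda_1(\Omega)$. Combining the ellipticity bound with Poincar\'e's inequality gives
\[
\mathfrak{b}(u,u)\ge \bigl(\mu^{-1}-\lambda/\lambda_1(\Omega)\bigr)\|\nabla u\|_{L^2(\Omega)}^{2},\qquad u\in H_0^1(\Omega),
\]
which is strictly coercive; any $u\in W^{2,r}(\Omega)\hookrightarrow H^1(\Omega)$ with $\mathcal{P}_{\mathfrak{a},\mathfrak{q}}u=0$ then lies in $H_0^1(\Omega)$ and must vanish. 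For surjectivity, given $(g,f)\in L^r(\Omega)\times W^{2-1/r,r}(\Gamma)$, I would pick a right inverse of the trace to produce $F\in W^{2,r}(\Omega)$ with $F_{|\Gamma}=f$, and then seek $v\in H_0^1(\Omega)$ solving $\mathfrak{b}(v,\cdot)=\langle g+\mathrm{div}(\mathfrak{a}\nabla F)-\mathfrak{q}F,\cdot\rangle$ via Lax--Milgram, with the right-hand side lying in $L^r(\Omega)\subset H^{-1}(\Omega)$. The candidate preimage is $u=v+F$; to place $u$ in $W^{2,r}(\Omega)$ I would invoke the Agmon--Douglis--Nirenberg / Calder\'on--Zygmund elliptic regularity theorem for divergence-form operators with Lipschitz leading coefficients and $L^\infty$ zeroth-order term on a $C^{1,1}$ domain. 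The main obstacle is precisely this $W^{2,r}$ upgrade for general $r\ge 2$, since $\mathfrak{a}$ is only assumed Lipschitz; one cites the classical interior plus up-to-the-boundary $L^r$ estimates and uses the already established injectivity to pass from a priori estimates to existence.

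Part (ii) is a standard Neumann-series perturbation. Writing $M_\mathfrak{q}u=(\mathfrak{q}u,0)$, one checks $\|M_\mathfrak{q}\|_{\mathrm{op}}\le C\|\mathfrak{q}\|_{L^\infty(\Omega)}$, where $C$ comes from the embedding $W^{2,r}(\Omega)\hookrightarrow L^r(\Omega)$ (indeed just $\|\mathfrak{q}u\|_{L^r}\le\|\mathfrak{q}\|_{L^\infty}\|u\|_{L^r}$). Since
\[
\mathcal{P}_{\mathfrak{a}_0,\mathfrak{q}_0+\mathfrak{q}}=\mathcal{P}_{\mathfrak{a}_0,\mathfrak{q}_0}\bigl(I+\mathcal{P}_{\mathfrak{a}_0,\mathfrak{q}_0}^{-1}M_\mathfrak{q}\bigr),
\]
choosing $\delta:=\bigl(2C\|\mathcal{P}_{\mathfrak{a}_0,\mathfrak{q}_0}^{-1}\|_{\mathrm{op}}\bigr)^{-1}$ ensures $\|\mathcal{P}_{\mathfrak{a}_0,\mathfrak{q}_0}^{-1}M_\mathfrak{q}\|_{\mathrm{op}}\le 1/2$ for $\mathfrak{q}\in B_{L^\infty(\Omega)}(0,\delta)$. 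The Neumann series then yields an inverse of $I+\mathcal{P}_{\mathfrak{a}_0,\mathfrak{q}_0}^{-1}M_\mathfrak{q}$ of operator norm at most $2$, so that $\mathcal{P}_{\mathfrak{a}_0,\mathfrak{q}_0+\mathfrak{q}}$ is an isomorphism and $\|\mathcal{P}_{\mathfrak{a}_0,\mathfrak{q}_0+\mathfrak{q}}^{-1}\|_{\mathrm{op}}\le 2\|\mathcal{P}_{\mathfrak{a}_0,\mathfrak{q}_0}^{-1}\|_{\mathrm{op}}$. No regularity issue arises here since we are perturbing only the zeroth-order coefficient inside the same functional framework already set up in part (i).
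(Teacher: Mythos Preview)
Your proposal is correct and aligns with the paper's argument. For (i) the paper simply cites \cite{CHY}, whereas you spell out the standard route (coercivity from $\mu\lambda<\lambda_1(\Omega)$, Lax--Milgram, then $W^{2,r}$ regularity for Lipschitz leading coefficients on a $C^{1,1}$ domain); for (ii) the paper phrases the perturbation as a Banach fixed-point argument for $T(u)=\mathcal{P}_{\mathfrak{a}_0,\mathfrak{q}_0}^{-1}(-\mathfrak{q}u+F,f)$ with the same $\delta=\bigl(2\|\mathcal{P}_{\mathfrak{a}_0,\mathfrak{q}_0}^{-1}\|_{\mathrm{op}}\bigr)^{-1}$, which is exactly your Neumann-series computation unwound iteratively, so the two presentations are equivalent.
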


\begin{proof}
(i) Follows by modifying slightly the proof of Choulli, Hu and Yamamoto \cite[Theorem 4.2]{CHY}.
\\
(ii) Pick $(F,f)\in L^r(\Omega )\times W^{2-1/r,r}(\Gamma)$ and define the mapping $T$ as follows
\[
T:W^{2,r}(\Omega )\rightarrow W^{2,r}(\Omega ):u\mapsto \mathcal{P}_{\mathfrak{a}_0,\mathfrak{q}_0}^{-1}(-\mathfrak{q}u+F,f).
\]
Clearly, we have
\[
T(u_1)-T(u_2)=\mathcal{P}_{\mathfrak{a}_0,\mathfrak{q}_0}^{-1}(-\mathfrak{q}(u_1-u_2),0).
\]
Hence
\[
\|T(u_1)-T(u_2)\|_{W^{2,r}(\Omega)}\le  \|\mathcal{P}_{\mathfrak{a}_0,\mathfrak{q}_0}^{-1}\|_{\mathrm{op}}\|\mathfrak{q}\|_{L^\infty(\Omega)}\|u_1-u_2\|_{W^{2,r}(\Omega)}.
\]

Let $\delta =1/\left[2 \|\mathcal{P}_{\mathfrak{a}_0,\mathfrak{q}_0}^{-1}\|_{\mathrm{op}}\right]$. Whence, if $\mathfrak{q}$ satisfies $\|\mathfrak{q}\|_{L^\infty(\Omega)}<\delta$ then
\begin{equation}\label{1.1}
\|T(u_1)-T(u_2)\|_{W^{2,r}(\Omega)}\le (1/2)\|u_1-u_2\|_{W^{2,r}(\Omega)}.
\end{equation}
According to Banach's fixed point theorem, $T$ admits a unique fixed point $u^\ast\in W^{2,r}(\Omega )$. In other words, we proved that there exists a unique $u^\ast \in W^{2,r}(\Omega )$ satisfying $\mathcal{P}_{\mathfrak{a}_0,\mathfrak{q}_0+\mathfrak{q}}u^\ast =(F,f)$. Furthermore, we get in light of \eqref{1.1}
\begin{align*}
\|\mathcal{P}_{\mathfrak{a}_0,\mathfrak{q}_0+\mathfrak{q}}^{-1}(F,f)&\|_{W^{2,r}(\Omega )}=\|u^\ast\|_{W^{2,r}(\Omega )}
\\
&\le \|T(u^\ast)-T(0)\|_{W^{2,r}(\Omega )}+\|T(0)\|_{ W^{2,r}(\Omega )}
\\
&\le (1/2)\|u^\ast\|_{W^{2,r}(\Omega)}+ \|\mathcal{P}_{\mathfrak{a}_0,\mathfrak{q}_0}^{-1}\|_{\mathrm{op}}\|(F,f)\|_{L^r(\Omega )\times W^{2-1/r,r}(\Gamma)}
\\
&\le (1/2)\|\mathcal{P}_{\mathfrak{a}_0,\mathfrak{q}_0+\mathfrak{q}}^{-1}(F,f)\|_{W^{2,r}(\Omega )}
\\
&\hskip 2cm + \|\mathcal{P}_{\mathfrak{a}_0,\mathfrak{q}_0}^{-1}\|_{\mathrm{op}}\|(F,f)\|_{L^r(\Omega )\times W^{2-1/r,r}(\Gamma)}
\end{align*}
and then 
\[
\|\mathcal{P}_{\mathfrak{a}_0,\mathfrak{q}_0+\mathfrak{q}}^{-1}\|_{\mathrm{op}}\le 2\|\mathcal{P}_{\mathfrak{a}_0,\mathfrak{q}_0}^{-1}\|_{\mathrm{op}}.
\]
The proof is then complete.
\end{proof}

\begin{corollary}\label{corollary1.1}
For any $(\mathfrak{a},\mathfrak{q})\in \mathcal{D}$ and $f\in W^{2-1/r,r}(\Gamma)$, the BVP \eqref{e1} has a unique solution $u=\mathcal{P}_{\mathfrak{a},\mathfrak{q}}^{-1}(0,f)$.
\end{corollary}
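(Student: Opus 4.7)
The plan is to deduce this immediately from part (i) of Theorem \ref{theorem1.1}. Observe that the BVP \eqref{e1} can be rewritten in operator form as $\mathcal{P}_{\mathfrak{a},\mathfrak{q}}(u)=(0,f)$, where the right-hand side belongs to $L^r(\Omega)\times W^{2-1/r,r}(\Gamma)$ since $0\in L^r(\Omega)$ and $f\in W^{2-1/r,r}(\Gamma)$ by hypothesis.

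By Theorem \ref{theorem1.1}(i), for every $(\mathfrak{a},\mathfrak{q})\in\mathcal{D}$ the map $\mathcal{P}_{\mathfrak{a},\mathfrak{q}}:W^{2,r}(\Omega)\to L^r(\Omega)\times W^{2-1/r,r}(\Gamma)$ is an isomorphism. Surjectivity yields a solution $u=\mathcal{P}_{\mathfrak{a},\mathfrak{q}}^{-1}(0,f)\in W^{2,r}(\Omega)$, and injectivity gives the uniqueness. This is the entirety of the argument; there is no real obstacle here since all the analytical work (coercivity of the bilinear form thanks to $\mu\lambda<\lambda_1(\Omega)$, $W^{2,r}$-regularity up to the boundary, invertibility via elliptic estimates) was already absorbed into the statement of Theorem \ref{theorem1.1}(i).

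The only point worth mentioning explicitly is that the definition of $\mathcal{D}=\bigcup_{(\mu,\lambda)\in\mathcal{J}}\mathcal{D}_{\mu,\lambda}$ with $\mathcal{J}=\{\mu\lambda<\lambda_1(\Omega)\}$ is precisely what guarantees the hypotheses of Theorem \ref{theorem1.1}(i) are met for the given $(\mathfrak{a},\mathfrak{q})$, so no additional verification is required. Thus the corollary follows in a single line.
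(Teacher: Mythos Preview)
Your proof is correct and matches the paper's approach: the paper gives no explicit proof of this corollary, treating it as an immediate consequence of Theorem \ref{theorem1.1}(i) exactly as you do.
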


\subsection{Differentiability properties}\label{subsection2.2}

Fix $(\mathfrak{a}_0,\mathfrak{q}_0)\in \mathcal{D}$ and $f\in W^{2-1/r,r}(\Gamma )$ non identically equal to zero. Let $\delta =\delta(\mathfrak{a}_0,\mathfrak{q}_0)$ be as in Theorem  \ref{theorem1.1} (ii).  For notational convenience we use in the sequel the notations 
\[
\mathscr{S}_{\mathfrak{q}}=\mathcal{P}_{\mathfrak{a}_0,\mathfrak{q}_0+\mathfrak{q}}^{-1},\quad \mbox{for each}\; \mathfrak{q}\in B_{L^\infty (\Omega)}(0,\delta )
\]
and $\varpi =2\|\mathcal{P}_{\mathfrak{a}_0,\mathfrak{q}_0}^{-1}\|_{\mathrm{op}}$. That is, we have, according to Theorem  \ref{theorem1.1} (ii),
\begin{equation}\label{1.2}
\|\mathscr{S}_{\mathfrak{q}}\|_{\mathrm{op}}\le\varpi ,\quad \mbox{for each}\; \mathfrak{q}\in B_{L^\infty (\Omega)}(0,\delta ).
\end{equation}

Define 
\[
\Psi : B_{L^\infty (\Omega)}(0,\delta)\rightarrow W^{2,r}(\Omega ):\mathfrak{q} \mapsto \mathscr{S}_{\mathfrak{q}}(0,f).
\]

We claim that the mapping $\Psi$ is Lipschitz continuous. Indeed, for $\mathfrak{q}_1,\mathfrak{q}_2\in B_{L^\infty (\Omega)}(0,\delta)$, we have
\[
\mathscr{S}_{\mathfrak{q}_1}(0,f)-\mathscr{S}_{\mathfrak{q}_2}(0,f)=\mathscr{S}_{\mathfrak{q}_1}(F,0),
\]
with
\[
F=(\mathfrak{q}_2-\mathfrak{q}_1)\mathscr{S}_{\mathfrak{q}_2}(0,f).
\]
We find by applying twice inequality \eqref{1.2} 
\[
\| \mathscr{S}_{\mathfrak{q}_1}(0,f)-\mathscr{S}_{\mathfrak{q}_2}(0,f)\|_{W^{2,r}(\Omega)}
\le  \varpi ^2\|f\|_{W^{2-1/r,r}(\Gamma )}\|\mathfrak{q}_1-\mathfrak{q}_2\|_{L^\infty (\Omega)}.
\]
That is, we have
\begin{equation}\label{1.3}
\| \Psi(\mathfrak{q}_1)-\Psi(\mathfrak{q}_2)\|_{W^{2,r}(\Omega)}\le \mathfrak{c}\|\mathfrak{q}_1-\mathfrak{q}_2\|_{L^\infty (\Omega)},
\end{equation}
where $\mathfrak{c}=\varpi ^2\|f\|_{W^{2-1/r,r}(\Gamma )}$.

Let $\mathfrak{q}\in B_{L^\infty (\Omega)}(0,\delta)$ and consider the linear map
\[
L_{\mathfrak{q}}:\mathfrak{p}\in L^\infty (\Omega)\mapsto \mathscr{S}_{\mathfrak{q}}(-\mathfrak{p}\Psi (\mathfrak{q}),0)\in W^{2,r}(\Omega ).
\]
In light of \eqref{1.2} we have
\[
\|L_{\mathfrak{q}}(\mathfrak{p})\|_{W^{2,r}(\Omega )}\le \mathfrak{c}\|\mathfrak{p}\|_{L^\infty (\Omega)},
\]
implying that $L_{\mathfrak{q}}$ is bounded.

Next, let $\mathfrak{p}\in L^\infty (\Omega)$ so that  $\mathfrak{p}+\mathfrak{q}\in B_{L^\infty (\Omega)}(0,\delta)$. Then it is not difficult to check that
\[
\Psi (\mathfrak{q}+\mathfrak{p})-\Psi (\mathfrak{q})-L_{\mathfrak{q}}(\mathfrak{p})=
\mathscr{S}_{\mathfrak{q}}(-\mathfrak{p} [\Psi (\mathfrak{q}+\mathfrak{p})- \Psi (\mathfrak{q})],0).
\]
We get by applying inequality \eqref{1.2} and then inequality \eqref{1.3}
\[
\|\Psi (\mathfrak{q}+\mathfrak{p})-\Psi (\mathfrak{q})-L_{\mathfrak{q}}(\mathfrak{p})\|_{W^{2,r}(\Omega )}\le \mathfrak{c} \varkappa \|\mathfrak{p}\|_{L^\infty (\Omega)}^2.
\]
This shows that $\Psi$ is Fr\'echet differentiable at $\mathfrak{q}$. The differential of $\Psi$ at $\mathfrak{q}$, denoted by $\Psi '(\mathfrak{q})$, is then given by
\begin{equation}\label{1.4}
\Psi '(\mathfrak{q})(\mathfrak{p})=\mathscr{S}_{\mathfrak{q}}(-\mathfrak{p}\Psi (\mathfrak{q}),0),\quad \mbox{for all}\; \mathfrak{p}\in L^\infty (\Omega).
\end{equation}

Let us now prove that 
\[
\Psi': B_{L^\infty (\Omega)}(0,\delta)\rightarrow \mathscr{B}(L^\infty (\Omega), W^{2,r}(\Omega))
\]
is continuous. To this end, let $\mathfrak{q}\in B_{L^\infty (\Omega)}(0,\delta)$ and $\hat{\mathfrak{q}}\in L^\infty (\Omega)$  so that $\mathfrak{q}+\hat{\mathfrak{q}}\in B_{L^\infty (\Omega)}(0,\delta)$. We get in light of formula \eqref{1.4}, where $\mathfrak{p}\in L^\infty (\Omega)$,
\begin{align*}
\Psi '(\mathfrak{q}+\hat{\mathfrak{q}})(\mathfrak{p})-\Psi '(\mathfrak{q})(\mathfrak{p})
&=\mathscr{S}_{\mathfrak{q}+\hat{\mathfrak{q}}}(-\mathfrak{p}\Psi (\mathfrak{q}+\hat{\mathfrak{q}}),0)-\mathscr{S}_{\mathfrak{q}}(-\mathfrak{p} \Psi (\mathfrak{q}),0)
\\
&=\mathscr{S}_{\mathfrak{q}}(-\mathfrak{p}[\Psi (\mathfrak{q}+\hat{\mathfrak{q}})-\Psi (\mathfrak{q})],0)+\mathscr{S}_{\mathfrak{q}}(-\hat{\mathfrak{q}}\Psi (\mathfrak{q}+\hat{\mathfrak{q}}),0).
\end{align*}

We can proceed as before to derive, with the aid of inequalities \eqref{1.2} and \eqref{1.3}, the following estimate
\[
\|\Psi' (\mathfrak{q}+\hat{\mathfrak{q}})-\Psi' (\mathfrak{q})\|_{\mathrm{op}}\le C_0\|\hat{\mathfrak{q}}\|_{L^\infty (\Omega)},
\]
where $C_0>0$ is a constant independent of $\hat{\mathfrak{q}}$. This shows that $\Psi'$ is continuous at $\mathfrak{q}$. In other words, we proved that $\Psi$ in continuously Fr\'echet differentiable in $B_{L^\infty (\Omega)}(0,\delta)$.

\subsection{Proof of Theorem \ref{main-theorem1}}

We give the proof for $j=1$. That for $j=2$ is quite similar. 

 According to $C^{2,\theta}$-H\"older regularity, we get $\mathscr{S}_0(0,f)\in C^{2,\theta}(\overline{\Omega})$. Furthermore, in light of the strong maximum principle (e.g. Gilbarg and Trudinger \cite[Theorem 3.5, page 35]{GT}), we have $\mathscr{S}_0(0,f)>\min_\Gamma f$ in $\Omega$. 

Let $\Psi$ be as in Subsection \ref{subsection2.2} with $\mathfrak{q}_0=0$ and introduce the mapping
\[
\Phi :B_{L^\infty (\Omega)}(0,\delta) \rightarrow L^\infty (\Omega): \mathfrak{q}\mapsto \Phi (\mathfrak{q})= \mathfrak{q}\Psi (\mathfrak{q}).
\]
Since $\Psi$ is continuously Fr\'echet differentiable then so is $\Phi$. We have in addition
\[
\Phi '(\mathfrak{q})(\mathfrak{p})=\mathfrak{p}\Psi(\mathfrak{q})+\mathfrak{q}\Psi'(\mathfrak{q})(\mathfrak{p}),\quad \mbox{for all}\; \mathfrak{p}\in L^\infty (\Omega).
\]
In particular
\[
\Phi'(0)(\mathfrak{p})=\mathfrak{p}\Psi(0)\; (=\mathfrak{p}\mathscr{S}_0(0,f)),\quad \mbox{for all}\; \mathfrak{p}\in L^\infty (\Omega).
\]
We define the linear map $\ell:L^\infty (\Omega)\rightarrow L^\infty (\Omega)$ by
\[
\ell(\mathfrak{h})=[\Psi(0)]^{-1}\mathfrak{h},\quad \mathfrak{h}\in L^\infty (\Omega).
\]

Clearly, $\ell$ is bounded with
\[
\|\ell\|_{\mathrm{op}}\le \|[\Psi(0)]^{-1}\|_{L^\infty (\Omega)}.
\]
We can check that $\ell$ is the inverse of $\Phi'(0)$. Therefore, according to the inverse function theorem $\Phi$ is a diffeomophism from a neighborhood $\mathcal{U}$ of $0$ in $L^\infty (\Omega)$ onto a neighborhood $\mathcal{V}$ of $0$ in $L^\infty (\Omega)$. Whence the expected inequality follows.

\subsection{Proof of Theorem \ref{main-theorem3}}

The proof is inspired by that of Bal, Ren, Uhlmann and Zhou \cite[Theorem 3.1]{BRUZ}.

Pick $\mathfrak{q},\tilde{\mathfrak{q}}\in \mathbf{Q}$, and set $u=u_{\mathfrak{q}}$, $\tilde{u}=u_{\tilde{\mathfrak{q}}}$, $v=\mathfrak{q}u^2$ and $\tilde{v}=\tilde{\mathfrak{q}}\tilde{v}^2$.  In light of Gilbarg and Trudinger \cite[Theorem 8.1, page 179]{GT} we have
\begin{equation}\label{mt3-1}
u\ge m, \quad \tilde{u}\ge m\quad\quad \mbox{on}\; \overline{\Omega}.
\end{equation}

On the other hand straightforward computations give
\[
\mbox{div}(\mathfrak{a}\nabla (u-\tilde{u}))+\sqrt{\mathfrak{q}\tilde{\mathfrak{q}}}(u-\tilde{u})=(\sqrt{\mathfrak{q}}+\sqrt{\tilde{\mathfrak{q}}})(\sqrt{v}-\sqrt{\tilde{v}}).
\]
Taking into account that $u-\tilde{u}\in H_0^1(\Omega)$, we find by applying Green's formula
\begin{align}
\int_\Omega \mathfrak{a}\nabla (u-\tilde{u})\cdot \nabla (u-\tilde{u})dx&-\int_\Omega \sqrt{\mathfrak{q}\tilde{\mathfrak{q}}}(u-\tilde{u})^2dx\label{mt3-2}
\\
&=\int_\Omega (\sqrt{\mathfrak{q}}+\sqrt{\tilde{\mathfrak{q}}})(\sqrt{\tilde{v}}-\sqrt{v})(u-\tilde{u})dx.\nonumber
\end{align}
But
\begin{equation}\label{mt3-3}
\int_\Omega \sqrt{\mathfrak{q}\tilde{\mathfrak{q}}}(u-\tilde{u})^2dx\le \overline{\mathfrak{q}}\int_\Omega (u-\tilde{u})^2dx,
\end{equation}
and, using that $\mathfrak{a}\in \mathcal{A}_\mu$ and Poincar\'e's inequality, we find
\begin{equation}\label{mt3-4}
\int_\Omega \mathfrak{a}\nabla (u-\tilde{u})\cdot \nabla (u-\tilde{u})dx\ge \mu ^{-1}\lambda_1(\Omega)\int_\Omega (u-\tilde{u})^2dx.
\end{equation}
Therefore \eqref{mt3-3} and \eqref{mt3-4} in \eqref{mt3-2} yield
\[
(\mu ^{-1}\lambda_1(\Omega)-\overline{\mathfrak{q}})\int_\Omega (u-\tilde{u})^2dx\le \int_\Omega (\sqrt{\mathfrak{q}}+\sqrt{\tilde{\mathfrak{q}}})(\sqrt{\tilde{v}}-\sqrt{v})(u-\tilde{u})dx,
\]
which, combined with Cauchy-Schwarz's inequality, entails
\begin{equation}\label{mt3-5}
\| u-\tilde{u}\|_{L^2(\Omega )}\le \frac{2\sqrt{\overline{\mathfrak{q}}}\mu}{\lambda_1(\Omega)-\mu\overline{\mathfrak{q}}}\| \sqrt{\tilde{v}}-\sqrt{v}\|_{L^2(\Omega )}.
\end{equation}

Also, elementary calculations enable us to establish the following identity 
\[
\sqrt{\tilde{\mathfrak{q}}}-\sqrt{\mathfrak{q}}=\frac{\sqrt{\mathfrak{q}\tilde{\mathfrak{q}}}}{\sqrt{v}}(u-\tilde{u})+\frac{\sqrt{\mathfrak{q}}}{\sqrt{v}}(\sqrt{\tilde{v}}-\sqrt{v}).
\]
Whence
\begin{equation}\label{mt3-6}
\|\sqrt{\tilde{\mathfrak{q}}}-\sqrt{\mathfrak{q}}\|_{L^2(\Omega)}\le \frac{\overline{q}}{\sqrt{\underline{\mathfrak{q}}}m}\| u-\tilde{u}\|_{L^2(\Omega )}+\frac{\sqrt{\overline{q}}}{\sqrt{\underline{\mathfrak{q}}}m}\| \sqrt{\tilde{v}}-\sqrt{v}\|_{L^2(\Omega )}.
\end{equation}

Let
\[
C=\frac{2\overline{q}\sqrt{\overline{\mathfrak{q}}}\mu}{\sqrt{\underline{\mathfrak{q}}}m(\lambda_1(\Omega)-\mu\overline{\mathfrak{q}})}+\frac{\sqrt{\overline{q}}}{\sqrt{\underline{\mathfrak{q}}}m}.
\]
Then \eqref{mt3-5} together with \eqref{mt3-6} imply
\[
\|\sqrt{\tilde{\mathfrak{q}}}-\sqrt{\mathfrak{q}}\|_{L^2(\Omega)}\le C\| \sqrt{\tilde{v}}-\sqrt{v}\|_{L^2(\Omega )}.
\]
To complete the proof it is sufficient to use the following inequalities
\begin{align*}
&\|\tilde{\mathfrak{q}}-\mathfrak{q}\|_{L^2(\Omega)}=\|(\sqrt{\tilde{\mathfrak{q}}}+\sqrt{\mathfrak{q}})(\sqrt{\tilde{\mathfrak{q}}}-\sqrt{\mathfrak{q}})\|_{L^2(\Omega)}\le 2\sqrt{\overline{\mathfrak{q}}}\|\sqrt{\tilde{\mathfrak{q}}}-\sqrt{\mathfrak{q}}\|_{L^2(\Omega)},
\\
& \| \sqrt{\tilde{v}}-\sqrt{v}\|_{L^2(\Omega )}=\left\| \frac{\tilde{v}-v}{\sqrt{\tilde{v}}+\sqrt{v}}\right\|_{L^2(\Omega )}\le \frac{1}{2\sqrt{\underline{\mathfrak{q}}}m}\| \tilde{v}-v\|_{L^2(\Omega )}.
\end{align*}

\begin{remark}
{\rm
We observe that the uniqueness result corresponding to Theorem \ref{main-theorem3} holds for a larger class of unknown coefficients $\mathfrak{q}$, only under the weaker assumption that $f$ is non identically equal to zero. We refer to Choulli and Triki \cite[Theorem 2.1]{CT1} for further details.
}
\end{remark}

\section{H\"older Stability inequalities}\label{section3}

\subsection{Proof of Theorem \ref{theorem-hs1}}\label{subsection3.1}

Let $0<\gamma \le 1$. We say $\mathcal{W}\subset L_+^1(\Omega )=\{w\in L^\infty (\Omega);\; w\ge 0\}$ is a uniform set of weights for the interpolation inequality 
\begin{equation}\label{int1}
\|\phi\|_{L^\infty (\Omega)}\le C\|\phi\|_{C^{0,\gamma}(\overline{\Omega})}^{1-\mu}\|\phi w\|_{L^1(\Omega)}^\mu,
\end{equation}
if the constants $C>0$ and $0<\mu <1$ can be chosen independently of $w\in \mathcal{W}$ and $\phi\in C^{0,\gamma}(\overline{\Omega})$.

We note that our choice of $r$ guarantees that $W^{2,r}(\Omega )$ is continuously embedded in $C(\overline{\Omega})$. Let then $\tilde{\mathfrak{e}}$ denotes the norm of this embedding. 

Fix $0<m\le \tilde{\mathfrak{e}}M$ and set
\begin{align*}
\mathscr{S}=\{u\in W^{2,r}(\Omega );\; -\Delta u+\mathfrak{q}u=0\; \mbox{in}\; \Omega &,\; \mbox{for some}\; \mathfrak{q}\in \mathcal{Q},
\\
&\|u\|_{W^{2,r}(\Omega)}\le M\; \mbox{and}\; |u_{|\Gamma}|\ge m\}.
\end{align*}

Minors modifications in the proof of Choulli and Triki \cite[Theorem 2.2]{CT2} yield the following result.

\begin{theorem}\label{theorem-wii}
$\mathscr{W}=\{w=u^2;\; u\in \mathscr{S}\}$ is a uniform set of weights for the weighted interpolation inequality \eqref{int1}, with $C=C(n,\Omega,r,\mathfrak{q}_-,\mathfrak{q}_+,m,M)$ and $\mu=\mu (n,\Omega,r,\mathfrak{q}_-,\mathfrak{q}_+,m,M)$.
\end{theorem}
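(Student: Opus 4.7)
The plan is to reduce the weighted interpolation inequality to a uniform quantitative volume lower bound of the form
\[
\int_{B(x_0,\rho)\cap \Omega} u^2\,dx \ge C\rho^s,
\]
valid for every $u\in \mathscr{S}$, every $x_0\in \overline{\Omega}$ and every sufficiently small $\rho>0$, with $C,s>0$ depending only on the listed parameters. Granted such a bound, the inequality \eqref{int1} follows by a standard H\"older continuity argument: choose a point $x_0$ that realizes $\|\phi\|_{L^\infty(\Omega)}$; the $C^{0,\gamma}$ seminorm of $\phi$ gives $|\phi|\ge \|\phi\|_{L^\infty(\Omega)}/2$ on $B(x_0,\rho)\cap \Omega$ for $\rho\sim (\|\phi\|_{L^\infty(\Omega)}/\|\phi\|_{C^{0,\gamma}(\overline{\Omega})})^{1/\gamma}$; integrating $|\phi|u^2=|\phi w|$ over this ball and optimizing in $\rho$ produces \eqref{int1} with $\mu=\gamma/(\gamma+s)$.

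The quantitative lower bound itself is established in two stages. First, near the boundary: since $r>n$, Sobolev embedding gives $W^{2,r}(\Omega)\hookrightarrow C^{1,\alpha}(\overline{\Omega})$ with $\alpha=1-n/r$, so the members of $\mathscr{S}$ form a uniformly H\"older continuous family up to $\Gamma$. Combined with $|u_{|\Gamma}|\ge m$, this forces $|u|\ge m/2$ on a boundary tubular neighborhood $\Omega_\delta=\{x\in\Omega : \mathrm{dist}(x,\Gamma)<\delta\}$ whose width $\delta=\delta(n,\Omega,r,m,M)>0$ is uniform in $u$; this yields both the desired lower bound when $x_0$ is close to $\Gamma$ and the global estimate $\int_{\Omega_\delta} u^2\,dx\ge (m/2)^2|\Omega_\delta|$. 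Second, for $x_0$ in the interior, one propagates the latter estimate from $\Omega_\delta$ to $B(x_0,\rho)$ along a chain of balls using the three-ball (Hadamard) inequality for solutions of $-\Delta u+\mathfrak{q}u=0$ with $\|\mathfrak{q}\|_{L^\infty(\Omega)}\le \mathfrak{q}_+$, exactly as in the argument of \cite[Theorem 2.2]{CT2}. The length of the chain is controlled by $\mathrm{diam}(\Omega)/\rho$, which produces the polynomial dependence $C\rho^s$.

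The main obstacle, and indeed the whole substance of the original argument in \cite[Theorem 2.2]{CT2}, is to guarantee that the doubling exponent and the three-ball constants entering the propagation depend only on $n$, $\Omega$ and on the uniform $L^\infty$ bound of the potential. The modifications needed here are essentially bookkeeping: one has to check that the range $0<\mathfrak{q}_-\le -\mathfrak{q}\le \mathfrak{q}_+<\lambda_1(\Omega)$ and the uniform $W^{2,r}$ bound control all the constants appearing in both the boundary H\"older estimate and the interior propagation, and that the width $\delta$ above is genuinely uniform over $u\in \mathscr{S}$. Once these uniform dependencies are verified, the conclusion of \cite[Theorem 2.2]{CT2} transfers to $\mathscr{W}$ and yields \eqref{int1} with the stated dependencies of $C$ and $\mu$.
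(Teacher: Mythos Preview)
Your proposal is correct and aligned with the paper. The paper itself does not give a proof of this theorem; it simply states that ``minor modifications in the proof of Choulli and Triki \cite[Theorem 2.2]{CT2}'' yield the result, and elsewhere notes that the technical results of Section~\ref{section3} are used without proof. Your write-up is precisely a faithful sketch of that argument: the reduction to a uniform lower bound $\int_{B(x_0,\rho)\cap\Omega}u^2\ge C\rho^s$, the boundary layer estimate via $|u_{|\Gamma}|\ge m$ together with the embedding $W^{2,r}\hookrightarrow C^{1,\alpha}$, and the interior propagation by the three-ball inequality with constants depending only on $\|\mathfrak{q}\|_{L^\infty}$. The bookkeeping observations you flag (uniform width $\delta$, uniform doubling/three-ball constants) are exactly the ``minor modifications'' the paper alludes to.
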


We know that, according to Choulli, Hu and Yamamoto \cite[Theorem 4.2]{CHY},  there exists a constant $M=M(n,\Omega,r ,f,\mathfrak{q}_-,\mathfrak{q}_+)$ so that
\begin{equation}\label{ae1}
\|u_{\mathfrak{q}}\|_{W^{2,r}(\Omega )}\le M,\quad \mbox{for all}\; \mathfrak{q}\in \mathcal{Q}.
\end{equation}

In light of Theorem \ref{theorem-wii} we have the following consequence.

\begin{corollary}\label{corollary-wii}
Let $W=\{w=u_{\mathfrak{q}}^2;\; \mathfrak{q}\in \mathcal{Q}\}$. Suppose that $f>0$ on $\Gamma$. Then $W$ is a uniform set of weights for the weighted interpolation inequality \eqref{int1}, with $C=C(n,\Omega,r,\mathfrak{q}_-,\mathfrak{q}_+,f)$ and $\mu=\mu (n,\Omega,r,\mathfrak{q}_-,\mathfrak{q}_+,f)$.
\end{corollary}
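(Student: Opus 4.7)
The plan is to view Corollary \ref{corollary-wii} as a direct consequence of Theorem \ref{theorem-wii}, obtained by showing that the family $\{u_{\mathfrak{q}} : \mathfrak{q}\in\mathcal{Q}\}$ is contained in the set $\mathscr{S}$ introduced above, for an appropriate choice of the parameters $m$ and $M$ whose dependence can be tracked back to $(n,\Omega,r,\mathfrak{q}_-,\mathfrak{q}_+,f)$. Once this inclusion is verified, the set $W$ of weights is contained in $\mathscr{W}$ and the interpolation inequality \eqref{int1} transfers automatically from $\mathscr{W}$ to $W$.

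First I would fix $M$. The a priori estimate \eqref{ae1} (taken from \cite[Theorem 4.2]{CHY}) produces a constant $M=M(n,\Omega,r,f,\mathfrak{q}_-,\mathfrak{q}_+)$ with $\|u_{\mathfrak{q}}\|_{W^{2,r}(\Omega)}\le M$ for every $\mathfrak{q}\in\mathcal{Q}$. Next I would fix $m$ using the positivity hypothesis on $f$: since $r>n$, the Sobolev embedding $W^{2-1/r,r}(\Gamma)\hookrightarrow C(\Gamma)$ makes $f$ continuous, and because $\Gamma$ is compact and $f>0$, the quantity $m:=\min_{\Gamma}f$ is strictly positive and depends only on $f$. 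The boundary trace $u_{\mathfrak{q}|\Gamma}=f$ thus satisfies $|u_{\mathfrak{q}|\Gamma}|\ge m$. The admissibility constraint $m\le \tilde{\mathfrak{e}}M$ in Theorem \ref{theorem-wii} is automatic, since
\[
m\le \max_{\Gamma} f =\max_{\Gamma}|u_{\mathfrak{q}|\Gamma}|\le \|u_{\mathfrak{q}}\|_{C(\overline{\Omega})}\le \tilde{\mathfrak{e}}\|u_{\mathfrak{q}}\|_{W^{2,r}(\Omega)}\le \tilde{\mathfrak{e}}M.
\]

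With $m$ and $M$ chosen in this way, each $u_{\mathfrak{q}}$ with $\mathfrak{q}\in\mathcal{Q}$ solves $-\Delta u_{\mathfrak{q}}+\mathfrak{q}u_{\mathfrak{q}}=0$ in $\Omega$ (this is \eqref{e1} with $\mathfrak{a}=I_n$), satisfies $\|u_{\mathfrak{q}}\|_{W^{2,r}(\Omega)}\le M$, and satisfies $|u_{\mathfrak{q}|\Gamma}|\ge m$; hence $u_{\mathfrak{q}}\in\mathscr{S}$ and so $W\subset \mathscr{W}$. Applying Theorem \ref{theorem-wii} gives \eqref{int1} uniformly for $w=u_{\mathfrak{q}}^2\in W$, with constants $C,\mu$ depending on $(n,\Omega,r,\mathfrak{q}_-,\mathfrak{q}_+,m,M)$. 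Since $m=m(f)$ and $M=M(n,\Omega,r,f,\mathfrak{q}_-,\mathfrak{q}_+)$, this dependence collapses to the announced one $(n,\Omega,r,\mathfrak{q}_-,\mathfrak{q}_+,f)$. There is no genuine obstacle to be overcome here; the whole argument is a packaging step, and all the real work has been done beforehand, namely in Theorem \ref{theorem-wii} (a variant of \cite[Theorem 2.2]{CT2}) and in the global $W^{2,r}$ bound \eqref{ae1}.
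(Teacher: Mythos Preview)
Your argument is correct and matches the paper's intent: the paper states the corollary as an immediate consequence of Theorem \ref{theorem-wii} without spelling out a proof, and you have supplied exactly the verification that $\{u_{\mathfrak{q}}:\mathfrak{q}\in\mathcal{Q}\}\subset\mathscr{S}$ (via the bound \eqref{ae1} and the boundary condition $u_{\mathfrak{q}|\Gamma}=f\ge m=\min_\Gamma f>0$) needed to make that deduction precise.
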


We are now ready to complete the proof of Theorem \ref{theorem-hs1}. We pick $\mathfrak{q},\tilde{\mathfrak{q}}\in \mathcal{Q}_\varrho$ and, for sake of  simplicity, we set $u=u_{\mathfrak{q}}$ and $\tilde{u}=u_{\tilde{\mathfrak{q}}}$.

We have
\[
\left\{ \begin{array}{ll} -\Delta u=-qu\in H^1(\Omega)\quad \mbox{in}\; \Omega, \\ u_{|\Gamma}=f. \end{array}\right.
\]
In light of Lions and Magenes \cite[Theorem 5.4 in page 165]{LM} we obtain, since $f\in H^{5/2}(\Gamma)$, that $u\in H^3(\Omega)$ and, for some constant $c=c(n,\Omega)$, we have
\[
\|u\|_{H^3(\Omega)}\le c\left(\|qu\|_{H^1(\Omega)}+\|f\|_{H^{5/2}(\Gamma )}\right).
\]
This and \eqref{ae1} yield in a straightforward manner that
\begin{equation}\label{0.3.1}
\|u\|_{H^3(\Omega)}\le C,
\end{equation}
where $C=C(n,\Omega ,f,r,\mathfrak{q}_-,\mathfrak{q}_+)$.

Similarly, $\tilde{u} \in H^3(\Omega)$ and 
\begin{equation}\label{0.3.2}
\|\tilde{u}\|_{H^3(\Omega)}\le C,
\end{equation}
with $C$ as in \eqref{0.3.1}.

Using the identity
\[
(\mathfrak{q}-\tilde{\mathfrak{q}})u=\Delta (u-\tilde{u})+\tilde{\mathfrak{q}}(\tilde{u}-u),
\]
we find
\[
\|(\mathfrak{q}-\tilde{\mathfrak{q}})u\|_{L^2(\Omega)}\le \|u-\tilde{u}\|_{H^2(\Omega)}+\mathfrak{q}_+\|\tilde{u}-u\|_{L^2(\Omega )}.
\]
This inequality together with the interpolation inequality, where $c=c(n,\Omega)$ is a constant,
\begin{equation}\label{ii}
\|w\|_{H^2(\Omega)}\le c\|w\|_{H^3(\Omega)}^{2/3}\|w\|_{L^2(\Omega)}^{1/3},\quad w\in H^3(\Omega),
\end{equation}
\eqref{0.3.1} and \eqref{0.3.2} imply
\begin{equation}\label{0.3.3}
\|(\mathfrak{q}-\tilde{\mathfrak{q}})u\|_{L^2(\Omega)}\le C\|\tilde{u}-u\|_{L^2(\Omega )}^{1/3}
\end{equation}
Here and until the end of this subsection $C=C(n,\Omega,r,\mathfrak{q}_-,\mathfrak{q}_+,f)>0$ is a generic constant.
From Corollary \ref{corollary-wii}  there exists a constant $0<\mu=\mu (n,\Omega,\mathfrak{q}_-,\mathfrak{q}_+,f)<1$ such that
\[
\|\mathfrak{q}-\tilde{\mathfrak{q}}\|_{L^\infty (\Omega )}\le C\|\mathfrak{q}-\tilde{\mathfrak{q}}\|_{C^{0,1}(\Omega )}^{1-\mu}\|(\mathfrak{q}-\tilde{\mathfrak{q}})u^2\|_{L^1(\Omega)}^\mu.
\]
Hence
\[
\|\mathfrak{q}-\tilde{\mathfrak{q}}\|_{L^\infty (\Omega )}\le C\varrho^{1-\mu}\|(\mathfrak{q}-\tilde{\mathfrak{q}})u\|_{L^2(\Omega)}^\mu \|u\|_{L^2(\Omega)}^\mu.
\]

Using \eqref{0.3.1} in order to get
\[
\|\mathfrak{q}-\tilde{\mathfrak{q}}\|_{L^\infty(\Omega )}\le C\varrho^{1-\mu}\|(\mathfrak{q}-\tilde{\mathfrak{q}})u\|_{L^2(\Omega)}^\mu .
\]
This estimate together with \eqref{0.3.3} give
\[
\|\mathfrak{q}-\tilde{\mathfrak{q}}\|_{L^\infty(\Omega )}\le C\varrho^{1-\mu}\|u-\tilde{u}\|_{L^2(\Omega)}^{\mu /3}.
\]
This is the expected inequality.

\subsection{Proof of Theorem \ref{theorem-hs3}}

In this proof $\mbox{sgn}_0$ denotes the sign function given by: $\mbox{sgn}_0(t)=-1$ if $t<0$, $\mbox{sgn}_0(0)=0$ and $\mbox{sgn}_0(t)=1$ if $t>0$. Let $(\mathfrak{a},\mathfrak{q}), (\tilde{\mathfrak{a}},\mathfrak{q})\in \mathcal{D}_{\lambda,\mu }^\bullet$ with $\mathfrak{a}=\tilde{\mathfrak{a}}$ on $\Gamma$ and
\begin{equation}\label{ths3-1}
\| \mathfrak{a}\|_{C^{0,1}(\overline{\Omega})}\le \varrho,\quad \| \tilde{\mathfrak{a}}\|_{C^{0,1}(\overline{\Omega})}\le \varrho.
\end{equation}

For notational convenience we set $u=u_{\mathfrak{a}}$ and $\tilde{u}=u_{\tilde{\mathfrak{a}}}$. We obtain after some straightforward computations
\[
\mbox{div}(|\mathfrak{a}-\tilde{\mathfrak{a}}|\nabla u)=\mbox{sgn}_0(\mathfrak{a}-\tilde{\mathfrak{a}})[\mathfrak{q}(u-\tilde{u})+\mbox{div}(\tilde{\mathfrak{a}}\nabla (u-\tilde{u}))].
\]
Whence
\[
\int_\Omega \mbox{div}(|\mathfrak{a}-\tilde{\mathfrak{a}}|\nabla u)udx =\int_\Omega\mbox{sgn}_0(\mathfrak{a}-\tilde{\mathfrak{a}})[\mathfrak{q}(u-\tilde{u})+\mbox{div}(\tilde{\mathfrak{a}}\nabla (u-\tilde{u}))]udx.
\]
Taking into account that $\mathfrak{a}=\tilde{\mathfrak{a}}$ on $\Gamma$, we get by applying Green's formula to the left hand side of the last identity
\begin{equation}\label{ths3-2}
\int_\Omega |\mathfrak{a}-\tilde{\mathfrak{a}}||\nabla u|^2dx=\int_\Omega\mbox{sgn}_0(\mathfrak{a}-\tilde{\mathfrak{a}})[\mathfrak{q}(u-\tilde{u})+\mbox{div}(\tilde{\mathfrak{a}}\nabla (u-\tilde{u}))]udx.
\end{equation}

Reducing \eqref{e1} to a BVP with homogeneous boundary condition, we prove in a straightforward manner that
\begin{equation}\label{ths3-3}
\|u\|_{H^1(\Omega )}\le C,
\end{equation}
where $C=C(n,\Omega ,\mu ,\lambda ,\varrho, f)$ is a  constant.  We then proceed as in Theorem \ref{theorem-hs1} in order to get
\begin{equation}\label{ths3-3.1}
\|u\|_{H^3(\Omega )}\le C.
\end{equation}
Here and henceforward $C=C(n,\Omega ,\mu ,\lambda ,\varrho,\mathfrak{q}, f)$ is a generic constant.

 We use \eqref{ths3-2} and \eqref{ths3-3} in order to get
\begin{equation}\label{ths3-4}
\| |\mathfrak{a}-\tilde{\mathfrak{a}}||\nabla u|^2\|_{L^1(\Omega )}\le C\|u-\tilde{u}\|_{H^2(\Omega)}.
\end{equation}

As \eqref{ths3-3} and \eqref{ths3-3.1}  remain valid when $u$ is substituted by $\tilde{u}$, we obtain by using \eqref{ii} together with \eqref{ths3-3.1}
\[
\|u-\tilde{u}\|_{H^2(\Omega)}\le C\|u-\tilde{u}\|_{L^2(\Omega)}^{1/3}.
\]
This inequality in \eqref{ths3-4} entails
\begin{equation}\label{ths3-5}
\| |\mathfrak{a}-\tilde{\mathfrak{a}}||\nabla u|^2\|_{L^1(\Omega )}\le C\|u-\tilde{u}\|_{L^2(\Omega)}^{1/3}.
\end{equation}

On the other hand, we can proceed similarly to the proof of Bonnetier, Choulli and Triki \cite[Lemma 3.7]{BCT}, by applying Garofalo and Lin \cite[Theorem 2.1]{GL} instead of Bonnetier, Choulli and Triki \cite[Lemma 3.6]{BCT}, in order to obtain
\begin{equation}\label{ths3-6}
\| \mathfrak{a}-\tilde{\mathfrak{a}}\|_{C(\overline{\omega})}\le C\| \mathfrak{a}-\tilde{\mathfrak{a}}\|_{C^{0,1}(\overline{\Omega} )}^{1-\gamma}\| |\mathfrak{a}-\tilde{\mathfrak{a}}||\nabla u|^2\|_{L^1(\Omega )}^\gamma ,
\end{equation}
where $0<\gamma =\gamma (n,\Omega ,\omega, \mu ,\lambda ,\varrho, f)<1$ and $C=C(n,\Omega ,\omega,\mu ,\lambda ,\varrho, f)$ are constants.

We end up getting the expected inequality by putting together \eqref{ths3-5} and \eqref{ths3-6}.

\subsection{Uniform lower bound for the gradient at the boundary}\label{subsection3.2}

Fix $0<\gamma < \nu <1$ and, for $0<\sigma_0\le \sigma_1$, set
\[
\Sigma=\left\{ \sigma \in C^{1,\nu}(\overline{\Omega});\; \sigma \ge \sigma_0\; \mbox{and}\; \|\sigma\|_{C^{1,\nu}(\overline{\Omega})}\le \sigma_1\right\}.
\]
Pick $\varphi \in C^{2,\gamma}(\Gamma)$ be non constant so that its critical points are its extrema. Consider then the BVP 
\begin{equation}\label{e2}
\left\{
\begin{array}{l}
\mbox{div}(\sigma\nabla u)=0\quad \mbox{in}\; \Omega,
\\
u_{|\Gamma}=\varphi.
\end{array}
\right.
\end{equation}

As $C^{1,\nu}(\overline{\Omega})$ is continuously embedded in $C^{1,\gamma}(\overline{\Omega})$, with reference to Gilbarg and Trudinger \cite[Theorem 6.6, page 98 and Theorem 6.14, page 107]{GT} we deduce that, for any $\sigma \in \Sigma$, the BVP \eqref{e2} has a unique solution $u_\sigma \in C^{2,\gamma}(\overline{\Omega})$ so that
\begin{equation}\label{glb1}
\|u_\sigma \|_{C^{2,\gamma}(\overline{\Omega})}\le C,
\end{equation}
where $C=C(n,\Omega ,\varphi,\sigma_0,\sigma_1,\gamma,\nu )$ is a constant.

\begin{proposition}\label{proposition-glb}
There exists a constant $\eta =\eta (n,\Omega ,\varphi,\sigma_0,\sigma_1,\gamma,\nu )>0$ so that
\begin{equation}\label{glb2}
|\nabla u_\sigma (x)|\ge \eta \quad \mbox{for all}\; (x,\sigma )\in \Gamma \times \Sigma .
\end{equation}
\end{proposition}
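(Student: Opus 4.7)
The plan is to argue by contradiction via a compactness argument. Suppose \eqref{glb2} fails. Then there exist sequences $(\sigma_k)\subset \Sigma$ and $(x_k)\subset \Gamma$ with $|\nabla u_{\sigma_k}(x_k)|\to 0$. Because $\Sigma$ is bounded in $C^{1,\nu}(\overline{\Omega})$ with $\nu>\gamma$, Arzel\`a--Ascoli yields a subsequence (not relabeled) converging in $C^{1,\gamma}(\overline{\Omega})$ to some $\sigma_\ast\in C^{1,\gamma}(\overline{\Omega})$ with $\sigma_\ast\ge\sigma_0$. Simultaneously, the uniform bound \eqref{glb1} together with Arzel\`a--Ascoli provides, after a further extraction, $u_{\sigma_k}\to u_\ast$ in $C^{2,\gamma'}(\overline{\Omega})$ for any fixed $0<\gamma'<\gamma$, and $x_k\to x_\ast\in\Gamma$ by compactness of $\Gamma$.

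Passing to the limit in the divergence-form equation and in the Dirichlet condition (the convergences are strong enough in $C^1$ for the coefficient and in $C^2$ for the solution), we find that $u_\ast\in C^2(\overline{\Omega})$ solves
\[
\mathrm{div}(\sigma_\ast\nabla u_\ast)=0\quad\mbox{in}\;\Omega,\qquad u_{\ast|\Gamma}=\varphi,
\]
and satisfies $\nabla u_\ast(x_\ast)=0$. Since $u_\ast=\varphi$ on $\Gamma$, the tangential part of $\nabla u_\ast$ at $x_\ast$ coincides with the tangential gradient of $\varphi$ at $x_\ast$. The vanishing of $\nabla u_\ast(x_\ast)$ therefore forces $x_\ast$ to be a critical point of $\varphi$, and by our hypothesis on $\varphi$, an extremum.

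Without loss of generality assume $\varphi(x_\ast)=\max_\Gamma\varphi$. The weak maximum principle for $\mathrm{div}(\sigma_\ast\nabla \cdot)$ gives $u_\ast\le \varphi(x_\ast)$ on $\overline{\Omega}$, so $x_\ast$ is a boundary point of maximum of $u_\ast$. Since $\varphi$ is non-constant, $u_\ast$ is non-constant, so the strong maximum principle yields $u_\ast<\varphi(x_\ast)$ inside $\Omega$. Since $\Omega$ is $C^{1,1}$ it satisfies the interior sphere condition at $x_\ast$; Hopf's boundary point lemma then gives $\partial_\nu u_\ast(x_\ast)>0$, contradicting $\nabla u_\ast(x_\ast)=0$. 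Hence \eqref{glb2} must hold with some uniform $\eta>0$, and a standard dependence inspection shows this $\eta$ depends only on the data $n,\Omega,\varphi,\sigma_0,\sigma_1,\gamma,\nu$.

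The main delicate point is to set up the compactness with the right loss of Hölder exponent so that the limiting equation has sufficient regularity to justify the strong maximum principle and Hopf's lemma classically; the heavy lifting is done by the uniform Schauder bound \eqref{glb1} and the gap $\gamma<\nu$ (ensuring that the $C^{1,\nu}$ ball sits compactly in $C^{1,\gamma}$). The remaining steps are purely structural, relying on the geometric hypothesis that all critical points of $\varphi$ are extrema to convert a boundary vanishing gradient into a boundary extremum, after which Hopf closes the argument.
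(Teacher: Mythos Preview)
Your proof is correct and follows essentially the same compactness strategy as the paper. The paper packages it as ``the map $(x,\sigma)\mapsto|\nabla u_\sigma(x)|$ is continuous on $\Gamma\times\Sigma_+$ (with $\Sigma_+$ carrying the $C^{1,\gamma}$ topology) and positive on the compact subset $\Gamma\times\Sigma$, hence bounded below,'' whereas you run the equivalent contradiction/sequential-compactness version; the key ingredients---the gap $\gamma<\nu$ for compactness of $\Sigma$, the uniform Schauder bound \eqref{glb1}, the hypothesis on critical points of $\varphi$, and Hopf's lemma at a boundary extremum---are identical in both.
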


\begin{proof}
Let $\sigma \in \Sigma$. We first note that, according to the strong maximum principle, $u_\sigma$ achieves both its maximum and its minimum on $\Gamma$. That is, the maximum and and the minimum of $u_\sigma$ coincide with those of $\varphi$. But according to Hopf's lemma (e.g. Gilbarg and Trudinger \cite[Lemma 3.4, page 34]{GT}), if $x\in \Gamma$ is an extremum point then $|\partial_\nu u(x)|>0$. On the other hand, according to the assumption on $\varphi$, we have $|\nabla_\tau u_\sigma  (x)|=|\nabla_\tau\varphi (x)|>0$ if $x$ is not an extremum point of $\varphi$, where $\nabla_\tau$ stands for the tangential gradient. In consequence $|\nabla u_\sigma (x)|>0$ for any $x\in \Gamma$.

Let $\Sigma_+=\{\sigma \in C^{1,\gamma}(\overline{\Omega}); \sigma >0\}$ and consider  the mapping
\[
T:(x,\sigma)\in \Gamma \times \Sigma_+\rightarrow [0,\infty ): (x,\sigma )\mapsto |\nabla u_\sigma(x)| .
\]
Fix $\sigma \in \Sigma_+$. Let $\sigma '\in C^{1,\gamma}(\overline{\Omega})$ so that $\|\sigma '\|_{C^{1,\gamma}(\overline{\Omega})}\le 1$ and $\sigma +\sigma '>\min \sigma/2$. We have 
\[
\left\{
\begin{array}{l}
\mbox{div}(\sigma\nabla (u_\sigma-u_{\sigma+\sigma'}))=\mbox{div}(\sigma'\nabla u_{\sigma+\sigma'})\quad \mbox{in}\; \Omega,
\\
(u_\sigma-u_{\sigma+\sigma'})_{|\Gamma}=0,
\end{array}
\right.
\]
We can apply twice Gilbarg and Trudinger \cite[Theorem 6.6, page 98]{GT} in order  to get
\begin{equation}\label{glb3}
\|u_\sigma-u_{\sigma+\sigma'}\|_{C^{2,\gamma}(\overline{\Omega})}\le C\|\sigma '\|_{C^{1,\gamma}(\overline{\Omega})},
\end{equation}
where $C=C(n,\Omega ,\varphi,\sigma_0,\sigma_1,\gamma,\nu )$ is a constant.

Therefore, for any $x,x'\in \Gamma$, we have
\begin{align*}
||\nabla u_\sigma (x)|-|\nabla u_{\sigma+\sigma'}(x')||&\le |\nabla u_\sigma (x)|-\nabla u_{\sigma}(x')|+|\nabla u_\sigma (x')|-\nabla u_{\sigma+\sigma'}(x')|
\\
&\le C\left(|x-x'|+ \|\sigma '\|_{C^{1,\gamma}(\overline{\Omega})}\right).
\end{align*}
That is,  the  mapping $T$ is continuous. We complete the proof by noting that, according to Gilbarg and Trudinger \cite[Lemma 6.36, page 136]{GT}, $\Gamma \times \Sigma$ is a compact subset of $\Gamma \times \Sigma_+$.
\end{proof}

\subsection{Proof of Theorem \ref{theorem-hs2}}\label{subsection3.3}

Hereafter, for $\delta>0$, we use the notations
\begin{align*}
&\Omega_\delta=\{z\in \Omega ;\; \mbox{dist}(z,\Gamma)\le \delta\},
\\
&\Omega^\delta=\{z\in \Omega ;\; \mbox{dist}(z,\Gamma)\ge \delta\}.
\end{align*}

\begin{lemma}\label{lemma-pos}
Under the condition $\min_\Gamma f >0$, we have
\[
u_{\mathfrak{a},\mathfrak{q}}(f) \ge \varepsilon \quad \mbox{for any}\; (\mathfrak{a},\mathfrak{q})\in \mathscr{D}_{\varkappa ,\Lambda},
\]
where $\varepsilon =\varepsilon (n,\Omega ,\varkappa ,\Lambda , f)>0$ is a constant
\end{lemma}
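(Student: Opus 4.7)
The plan is to argue by contradiction, exploiting the uniform a priori bound \eqref{he} together with a standard compactness argument. Suppose no such $\varepsilon >0$ exists. Then there exist sequences $(\mathfrak{a}_k,\mathfrak{q}_k)\in \mathscr{D}_{\varkappa ,\Lambda}$ and $x_k\in \overline{\Omega}$ so that $u_k(x_k)\to 0$, where $u_k=u_{\mathfrak{a}_k,\mathfrak{q}_k}(f)$. First I would observe that, since $\mathfrak{q}_k\ge 0$ and $f\ge \min_\Gamma f>0$ on $\Gamma$, the weak maximum principle yields $u_k\ge 0$ on $\overline{\Omega}$, so that $u_k(x_k)=\min_{\overline{\Omega}} u_k$.

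Next I would extract convergent subsequences. The uniform bounds $\|\mathfrak{a}_k\|_{C^{1,\beta}(\overline{\Omega})}\le \Lambda$ and $\|\mathfrak{q}_k\|_{C^{0,\beta}(\overline{\Omega})}\le \Lambda$ combined with Arzel\`a--Ascoli deliver, along a subsequence, $\mathfrak{a}_k\to \mathfrak{a}$ in $C^1(\overline{\Omega})$ and $\mathfrak{q}_k\to \mathfrak{q}$ in $C(\overline{\Omega})$, with limits satisfying $\mathfrak{a}\ge \varkappa$ and $\mathfrak{q}\ge 0$. A further extraction based on the uniform $C^{2,\beta}$ estimate \eqref{he} yields $u_k\to u$ in $C^2(\overline{\Omega})$ for some $u\in C^{2,\beta}(\overline{\Omega})$. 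Writing the equation as $-\mathfrak{a}_k\Delta u_k-\nabla \mathfrak{a}_k\cdot \nabla u_k+\mathfrak{q}_ku_k=0$ and passing to the limit term by term shows that $u$ solves \eqref{e1} with coefficients $(\mathfrak{a},\mathfrak{q})$ and boundary data $f$. Moreover, by compactness of $\overline{\Omega}$, we may assume $x_k\to x^*\in \overline{\Omega}$, and the uniform convergence then forces $u(x^*)=0$.

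The key step is to rule out such a zero. If $x^*\in \Gamma$, then $u(x^*)=f(x^*)\ge \min_\Gamma f >0$, an immediate contradiction. If $x^*\in \Omega$, then $u$ achieves a nonpositive interior minimum, so the strong minimum principle (Gilbarg--Trudinger \cite[Theorem 3.5, page 35]{GT}), applicable because $\mathfrak{q}\ge 0$, forces $u$ to be constant, hence $u\equiv 0$, which again contradicts $u_{|\Gamma}=f>0$. The main subtlety I anticipate is ensuring that the limit pair $(\mathfrak{a},\mathfrak{q})$ still lies in a class in which both the equation passes to the limit and the strong minimum principle applies, but this is guaranteed by the pointwise closedness of the conditions $\mathfrak{a}\ge \varkappa$ and $\mathfrak{q}\ge 0$ under uniform convergence, together with the regularity of the limit inherited from the lower semicontinuity of the H\"older norms.
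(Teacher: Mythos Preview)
Your argument is correct, but it takes a genuinely different route from the paper. The paper gives a direct, quantitative proof: it uses the uniform $C^{2,\beta}$ bound \eqref{he} to propagate the boundary positivity $f\ge m$ a fixed distance $\delta=[m/(2K)]^{1/\beta}$ into the interior via H\"older continuity, obtaining $u\ge m/2$ on the boundary layer $\Omega_\delta$, and then invokes Harnack's inequality on the compactly contained set $\Omega^{\delta/2}$ to convert this into a uniform interior lower bound $u\ge m/(2c)$. By contrast, you argue by contradiction and compactness: Arzel\`a--Ascoli on the coefficients and on the solutions (via \eqref{he}) produces a limiting solution with an interior zero, which the strong minimum principle rules out. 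Your approach is shorter and conceptually clean, but it is non-constructive: the constant $\varepsilon$ cannot be traced explicitly, whereas the paper's argument in principle yields $\varepsilon=\min\{m/2,\, m/(2c)\}$ with $c$ the Harnack constant. One small wording issue: the negation of the statement gives you points $x_k$ with $u_k(x_k)<1/k$, not a priori $u_k(x_k)=\min u_k$; you can of course replace $x_k$ by an actual minimizer since $u_k\in C(\overline{\Omega})$, so this is harmless.
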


\begin{proof}
Let $(\mathfrak{a},\mathfrak{q})\in\mathscr{D}_{\varkappa ,\Lambda}$, $u=u_{\mathfrak{a},\mathfrak{q}}(f)$, and let $K$ be the constant in \eqref{he}. If $x\in \Omega$ and $y\in \Gamma$ are so that $|x-y|\le \delta$, then
\[
u(x)\ge u(y)-K|x-y|^\beta\ge m-K\delta^\beta ,
\]
where $m=\min_\Gamma f$.

We get by taking $\delta = [m/(2K)]^{1/\beta}$
\begin{equation}\label{pos1}
u(x)\ge m/2\quad \mbox{for any}\; x\in \Omega_\delta .
\end{equation}
We apply Harnak's inequality (e.g. Gilbarg and Trudinger \cite[Theorem 8.21, page 199]{GT}) in order to get
\[
\sup_{\Omega^{\delta/2}} u\le c\inf_{\Omega^{\delta/2}} u,
\]
where $c=c(n,\Omega  ,\varkappa ,\Lambda , f)>0$ is a constant. 

Inequality \eqref{pos1} then yields 
\begin{equation}\label{pos2}
m/(2c)\le u(x)\quad \mbox{for any}\; x\in \Omega^{\delta/2}.
\end{equation}

The expected inequality follows by putting together \eqref{pos1} and \eqref{pos2}.
\end{proof}

As a consequence of estimate \eqref{he} and Lemma \ref{lemma-pos} we obtain, after making straightforward calculations, the following result.

\begin{corollary}\label{corollary-quo}
Let $(\mathfrak{a},\mathfrak{q})\in \mathscr{D}_{\varkappa ,\Lambda}$, $f_1,f_2\in C^{2,\beta}(\Gamma )$ with $f_1>0$. Set 
\[
w=w_{\mathfrak{a},\mathfrak{q}}=\frac{u_{\mathfrak{a},\mathfrak{q}}(f_2)}{u_{\mathfrak{a},\mathfrak{q}}(f_1)}, \quad h=\frac{f_2}{f_1}\quad  \mathrm{and}\quad \sigma =\mathfrak{a}u_{\mathfrak{a},\mathfrak{q}}^2(f_1).
\]
Then $w\in C^{2,\beta}(\overline{\Omega})$ is the solution of the BVP
\[
\mathrm{div} (\sigma \nabla w)=0\; \mathrm{in}\; \Omega ,\quad w_{|\Gamma}=h.
\]
Furthermore
\begin{equation}\label{quo}
\mu_0\le \sigma ,\quad \|\sigma\|_{C^{1,\beta}(\Gamma)}\le \mu_1 \quad \mathrm{and}\quad \|w\|_{C^{2,\beta}(\overline{\Omega})}\le M,
\end{equation}
for some positive constants $\mu_0=\mu_0 (n,\Omega  ,\varkappa ,\Lambda ,\beta, f_1)$, $\mu_1=\mu_1 (n,\Omega  ,\varkappa ,\Lambda , f_1)$ and $M=M(n,\Omega  ,\lambda ,\Lambda , \beta,f_1,f_2)$.
\end{corollary}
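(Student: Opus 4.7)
The plan is to proceed by direct computation, exactly as the author suggests. Set $u_1=u_{\mathfrak{a},\mathfrak{q}}(f_1)$ and $u_2=u_{\mathfrak{a},\mathfrak{q}}(f_2)$, both of which lie in $C^{2,\beta}(\overline{\Omega})$ with the uniform bound \eqref{he}. The essential preliminary is to invoke Lemma \ref{lemma-pos} applied with the boundary data $f_1$ (which is positive on $\Gamma$) to obtain a constant $\varepsilon=\varepsilon(n,\Omega,\varkappa,\Lambda,f_1)>0$ such that $u_1\ge \varepsilon$ on $\overline{\Omega}$. This guarantees that the quotient $w=u_2/u_1$ is well defined, that $\sigma=\mathfrak{a}u_1^2$ is bounded away from zero, and that all reciprocals below make sense.

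Next I derive the divergence-form equation for $w$. Writing
\[
\sigma\nabla w=\mathfrak{a}u_1^2\nabla\!\left(\frac{u_2}{u_1}\right)=\mathfrak{a}\bigl(u_1\nabla u_2-u_2\nabla u_1\bigr),
\]
and taking the divergence, the cross terms $\mathfrak{a}\nabla u_1\cdot\nabla u_2$ cancel, leaving
\[
\mathrm{div}(\sigma\nabla w)=u_1\,\mathrm{div}(\mathfrak{a}\nabla u_2)-u_2\,\mathrm{div}(\mathfrak{a}\nabla u_1)=u_1(\mathfrak{q}u_2)-u_2(\mathfrak{q}u_1)=0,
\]
where I used that $-\mathrm{div}(\mathfrak{a}\nabla u_j)+\mathfrak{q}u_j=0$ for $j=1,2$. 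The boundary condition is immediate: $w_{|\Gamma}=f_2/f_1=h$. This establishes the BVP.

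The three estimates in \eqref{quo} are then routine consequences of \eqref{he}, the bound $u_1\ge\varepsilon$, and the definition of $\mathscr{D}_{\varkappa,\Lambda}$. For the lower bound I simply write $\sigma=\mathfrak{a}u_1^2\ge\varkappa\varepsilon^2$, which yields $\mu_0$. For the upper bound on $\|\sigma\|_{C^{1,\beta}(\Gamma)}$, I use that $\mathfrak{a}\in C^{1,\beta}(\overline{\Omega})$ with $\|\mathfrak{a}\|_{C^{1,\beta}}\le\Lambda$ and $u_1\in C^{2,\beta}(\overline{\Omega})$ with $\|u_1\|_{C^{2,\beta}}\le K$, together with the fact that $C^{1,\beta}$ is a Banach algebra; the product bound in $C^{1,\beta}(\overline{\Omega})$ controls the trace on $\Gamma$. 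Finally, for $\|w\|_{C^{2,\beta}(\overline{\Omega})}$, since $u_1\ge\varepsilon>0$ uniformly, the reciprocal $1/u_1$ lies in $C^{2,\beta}(\overline{\Omega})$ with a bound depending only on $\varepsilon$ and $\|u_1\|_{C^{2,\beta}}$; multiplying by $u_2$ and again using the Banach algebra property yields the desired bound $M$. There is no genuine obstacle here; the only nontrivial input is Lemma \ref{lemma-pos}, which has already been established.
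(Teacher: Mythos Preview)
Your proof is correct and is precisely the ``straightforward calculations'' from estimate \eqref{he} and Lemma \ref{lemma-pos} that the paper invokes without spelling out. You have filled in exactly the computation the author had in mind: the divergence identity for $w$ via the cancellation of the cross terms, and the three bounds in \eqref{quo} via the uniform positivity $u_1\ge\varepsilon$ together with the Banach algebra structure of the H\"older spaces.
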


Let $w=w_{\mathfrak{a},\mathfrak{q}}$ be as in the preceding corollary. In light of Proposition \ref{proposition-glb} we have
\[
|\nabla w(y)|\ge \eta \quad \mbox{for any}\; y\in \Gamma ,
\]
for some constant $\eta=\eta (n,\Omega  ,\varkappa ,\Lambda ,\beta, f_1,f_2)>0$.

Pick $\delta >0$. Let $x\in \Omega_\delta$ and $y\in \Gamma$ so that $|x-y|\le \delta$. Then
\[
|\nabla w(x)|\ge |\nabla w(y)|-M\hat{\mathfrak{e}}\delta \ge \eta-M\hat{\mathfrak{e}}\delta ,
\]
where $\hat{\mathfrak{e}}$ is a constant depending only on  the embedding $C^{2,\beta}(\overline{\Omega})\hookrightarrow C^{1,1}(\overline{\Omega})$. We then fix $\delta >0$ sufficiently small in such a way that
\begin{equation}\label{equa1}
|\nabla w(x)|\ge  \eta/2 \quad \mbox{for any}\; x\in \Omega_\delta .
\end{equation}

We get  by applying Bonnetier, Choulli and Triki \cite[Corollary 3.1]{BCT} that
\begin{equation}\label{equa2}
C\rho^\upsilon\le \|\nabla w\|_{L^2(B(x,\rho))}^2\quad \mbox{for any}\; x\in \Omega ^\delta \; \mbox{and}\; 0<\rho<\delta ,
\end{equation}
where $C=C(n,\Omega  ,\varkappa ,\Lambda ,\beta, f_1,f_2,\delta )$ and $\upsilon=\upsilon(n,\Omega  ,\varkappa ,\Lambda ,\beta, f_1,f_2,\delta)$ are positive constants.

\begin{lemma}\label{lemma-wii}
If $w$ is as in Corollary \ref{corollary-quo} then 
\begin{equation}\label{equa3}
\|\phi\|_{C(\overline{\Omega})}\le C\|\phi\|_{C^{0,\beta}(\overline{\Omega})}^{1-\gamma}\|\phi|\nabla w|\|_{L^1(\Omega )}^\gamma , \quad \mbox{for any}\; \phi \in C^{0,\beta}(\overline{\Omega}),
\end{equation}
where $C=C(n,\Omega ,\beta ,\varkappa ,\Lambda ,f_1,f_2)>0$ and $0<\gamma=\gamma (n,\Omega ,\beta ,\varkappa ,\Lambda ,f_1,f_2)<1$ are constants.
\end{lemma}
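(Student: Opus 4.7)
The plan is to estimate $N:=\|\phi\|_{C(\overline{\Omega})}$ by localising around a maximiser $x_0\in\overline{\Omega}$ of $|\phi|$: Hölder continuity forces $|\phi|$ to stay comparable to $N$ on a small ball about $x_0$, while the lower bounds \eqref{equa1} and \eqref{equa2} force $|\nabla w|$ to have non-negligible $L^1$ mass on the same ball. Combined, these give a lower bound on $\|\phi|\nabla w|\|_{L^1(\Omega)}$ that is polynomial in $N$ and $\|\phi\|_{C^{0,\beta}(\overline{\Omega})}$, and solving for $N$ yields the weighted interpolation inequality.

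More concretely, I would fix $x_0\in\overline{\Omega}$ with $|\phi(x_0)|=N$, set $A:=\|\phi\|_{C^{0,\beta}(\overline{\Omega})}$, and observe that $|\phi(x)|\ge N/2$ on $B(x_0,\rho_0)\cap\Omega$ as soon as $\rho_0\le (N/(2A))^{1/\beta}$. Since $\Omega$ is $C^{2,\beta}$, one also has $|B(x_0,\rho)\cap\Omega|\ge c_\Omega\rho^n$ uniformly in $x_0\in\overline{\Omega}$. Then I split into two cases, after fixing the $\delta$ produced in \eqref{equa1} and \eqref{equa2}:

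\textbf{Boundary case} ($\mathrm{dist}(x_0,\Gamma)\le\delta$). Taking $\rho:=\min(\rho_0,\delta/2)$ (and, if necessary, choosing the initial $\delta$ slightly smaller than the one from \eqref{equa1} so that $B(x_0,\rho)\cap\Omega\subset\Omega_\delta$), inequality \eqref{equa1} gives $|\nabla w|\ge\eta/2$ on $B(x_0,\rho)\cap\Omega$, hence
\[
\|\phi|\nabla w|\|_{L^1(\Omega)}\ge (N/2)(\eta/2)c_\Omega\rho^n .
\]
\textbf{Interior case} ($\mathrm{dist}(x_0,\Gamma)\ge\delta$). Taking $\rho:=\min(\rho_0,\delta)$, so $B(x_0,\rho)\subset\Omega$, I combine \eqref{equa2} with the uniform bound $\|\nabla w\|_{L^\infty(\Omega)}\le M$ from \eqref{quo} via $\|\nabla w\|_{L^2}^2\le\|\nabla w\|_{L^\infty}\|\nabla w\|_{L^1}$ to obtain
\[
\|\nabla w\|_{L^1(B(x_0,\rho))}\ge C\rho^\upsilon/M,
\]
and therefore $\|\phi|\nabla w|\|_{L^1(\Omega)}\ge(N/2)(C/M)\rho^\upsilon$.

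In each case I optimise in $\rho$: if $\rho_0$ is admissible (i.e.\ $\rho_0\le\delta/2$ in the boundary case, $\rho_0\le\delta$ in the interior case), substituting $\rho=\rho_0=(N/(2A))^{1/\beta}$ produces an inequality of the form
\[
\|\phi|\nabla w|\|_{L^1(\Omega)}\ge c\,N(N/A)^{\alpha/\beta},
\]
with $\alpha=n$ or $\alpha=\upsilon$, and solving gives $N\le C A^{\alpha/(\alpha+\beta)}\|\phi|\nabla w|\|_{L^1(\Omega)}^{\beta/(\alpha+\beta)}$. If on the contrary $\rho_0$ exceeds the threshold, then $N\le 2A\delta^\beta$ times a constant, and \eqref{equa3} follows trivially from $\|\phi|\nabla w|\|_{L^1}\le M|\Omega|A$ up to increasing $C$. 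Finally one picks $\gamma$ as the smaller of $\beta/(n+\beta)$ and $\beta/(\upsilon+\beta)$ and absorbs constants; all of them depend only on $n,\Omega,\beta,\varkappa,\Lambda,f_1,f_2$ by Corollary \ref{corollary-quo}, Proposition \ref{proposition-glb}, and the uniformity built into \eqref{equa2}.

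The main obstacle is ensuring uniformity of the constants in $\rho$ and in the class $\mathscr{D}_{\varkappa,\Lambda}$: the lower bound \eqref{equa1} holds in a tubular neighbourhood of fixed thickness, \eqref{equa2} gives the doubling-type estimate on interior balls with a uniform exponent $\upsilon$, and \eqref{quo} supplies a uniform $C^{1,1}$-type bound on $\nabla w$. Once these three ingredients are invoked with the same $\delta$, the case analysis and optimisation in $\rho$ are elementary.
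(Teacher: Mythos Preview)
Your approach is essentially the paper's: the same boundary/interior split using \eqref{equa1} and \eqref{equa2}, the same H\"older-continuity localisation, and the same optimisation in the ball radius. The paper works pointwise at every $x$ (not just at a maximiser) and integrates against $|\nabla w|^2$ rather than $|\nabla w|$, but these are cosmetic differences.

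There is one genuine slip in your edge case. When $\rho_0=(N/(2A))^{1/\beta}$ exceeds the threshold $\delta$ (or $\delta/2$), you conclude ``$N\le 2A\delta^\beta$ times a constant'', but the inequality goes the other way: $\rho_0>\delta$ means $N>2A\delta^\beta$. Moreover, your stated justification via the \emph{upper} bound $\|\phi|\nabla w|\|_{L^1}\le M|\Omega|A$ cannot produce the required \emph{lower} bound on $\|\phi|\nabla w|\|_{L^1}$. The fix is easy: when $\rho_0\ge\delta$, take $\rho=\delta$; since $A\delta^\beta\le N/2$ in this regime you still have $|\phi|\ge N/2$ on $B(x_0,\delta)\cap\Omega$, so the same lower bounds give $\|\phi|\nabla w|\|_{L^1}\ge c\,N$ with $c$ depending only on $\delta$ and the data, and then $N=N^{1-\gamma}N^\gamma\le A^{1-\gamma}\bigl(c^{-1}\|\phi|\nabla w|\|_{L^1}\bigr)^\gamma$ finishes this case.
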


\begin{proof}
By homogeneity it is sufficient to prove \eqref{equa3} with $\phi \in C^{0,\beta}(\overline{\Omega})$ satisfying $\|\phi\|_{C^{0,\beta}(\overline{\Omega})}=1$. 

For $x\in \Omega ^\delta$ and $y\in B(x,\rho)$, $0<\rho <\delta$, we have
\[
|f(x)|\le |f(y)|+\rho ^\beta.
\]
In consequence
\[
|\phi (x)|\int_{B(x,\rho)}|\nabla w(y)|^2dy\le \int_{B(x,\rho)}|\phi (y)||\nabla w(y)|^2dy+\rho ^\beta \int_{B(x,\rho)}|\nabla w(y)|^2dy .
\]
As $w$ is non constant, we have $\|\nabla w\|_{L^2(B(x,\rho)}\ne 0$ by the uniqueness of continuation property and hence
\[
|\phi (x)|\le \frac{\|\phi |\nabla w|^2\|_{L^1(\Omega)}}{\|\nabla w\|_{L^2(B(x,\rho))}^2}+\rho ^\beta .
\]
This and \eqref{equa2} yield
\[
C|\phi (x)|\le \rho^{-\upsilon}\|\phi |\nabla w|^2\|_{L^1(\Omega)}+\rho ^\beta \quad \mbox{for any}\; x\in \Omega^\delta \; \mbox{and}\; 0<\rho <\delta .
\]
That is, we have
\begin{equation}\label{equa4}
C\|\phi \|_{L^\infty (\Omega ^\delta )}\le \rho^{-\upsilon}\|\phi |\nabla w|^2\|_{L^1(\Omega)}+\rho ^\beta \quad \mbox{for any}\; 0<\rho <\delta .
\end{equation}

Next, assume that $\|\phi\|_{L^\infty (\Omega_\delta)}\ne 0$. Pick then $x_0\in \Omega_\delta$ so that $|\phi (x_0)|=\|\phi\|_{L^\infty (\Omega_\delta)}$. For $0<\rho <\delta $, we have
\[
|\phi (x_0)|\le |\phi (x)|+ \rho^\beta \quad x\in B(x_0,\rho)\cap \Omega_\delta .
\]  
Whence
\[
|\phi (x_0)|\le (\eta /2)^{-2}|\phi (x)||\nabla w|^2 +\rho^\beta \quad x\in B(x_0,\rho)\cap \Omega_\delta 
\]
implying
\[
|\phi (x_0)||B(x_0,\rho)\cap \Omega_\delta |\le (\eta /2)^{-2}\int_{B(x_0,\rho)\cap \Omega_\delta}|\phi (x)||\nabla w|^2 +\rho^\alpha |B(x_0,\rho)\cap \Omega_\delta|  .
\]
But since $\Omega$ has the uniform interior cone property, we have $|B(x_0,\rho)\cap \Omega_\delta |\ge c\rho ^n$, for any $0<\rho<\delta/2$, where $c=c(\Omega)$ is a constant. In consequence
\begin{equation}\label{equa5}
C\|\phi\|_{L^\infty (\Omega_\delta)}\le \rho ^{-n}\|\phi |\nabla w|^2\|_{L^1(\Omega)}+\rho^\beta \quad \mbox{for any}\; 0<\rho <\delta/2 .
\end{equation}

A combination of \eqref{equa4} and \eqref{equa5} gives
\begin{equation}\label{equa6}
C\|\phi\|_{L^\infty (\Omega )}\le \rho ^{-k}\|\phi |\nabla w|^2\|_{L^1(\Omega)}+\rho^\beta \quad \mbox{for any}\; 0<\rho <\delta/2 ,
\end{equation}
where $k=\max (n,\upsilon)$.

A well known argument consisting in minimizing the right hand side of \eqref{equa6} with respect to $\rho$ yields the expected inequality.
\end{proof}

Let $(\mathfrak{a},\mathfrak{q})$, $(\tilde{\mathfrak{a}},\tilde{\mathfrak{q}})\in \mathscr{D}_{\varkappa,\Lambda}$ satisfying $(\mathfrak{a},\mathfrak{q})=(\tilde{\mathfrak{a}},\tilde{\mathfrak{q}})$ on $\Gamma$. With the aid of the weighted interpolation inequality \eqref{equa3}, we can mimic the last part of the proof of Bonnetier, Choulli and Triki \cite[Theorem 1.1]{BCT} in order to prove the following stability inequality, with for $j=1,2$, $u_j=u_{\mathfrak{a},\mathfrak{q}}(f_j)$ and $\tilde{u}_j=u_{\tilde{\mathfrak{a}},\tilde{\mathfrak{q}}}(f_j)$,
\[
\|\mathfrak{a}-\tilde{\mathfrak{a}}\|_{C^{1,\beta}(\overline{\Omega})}+\|\mathfrak{q}-\tilde{\mathfrak{q}}\|_{C^{0,\beta}(\overline{\Omega})}\le \left(\|u_1-\tilde{u}_1\|_{C(\overline{\Omega})}+\|u_2-\tilde{u}_2\|_{C(\overline{\Omega})}\right)^\gamma ,
\]
where $C=C(n,\Omega ,\beta ,\varkappa ,\Lambda ,f_1,f_2)>0$ and $0<\gamma=\gamma (n,\Omega ,\beta  ,\varkappa ,\Lambda ,f_1,f_2)<1$ are constants. Theorem \ref{theorem-hs2} is then proved.

\subsection*{Acknowledgement} The author would like to thank the referee for his helpful comments which have contributed to the improvement of this work.

\end{document}